\newif\ifcolorcomments
\newcommand{\allowcomments}[4]{
\newcommand{#1}[1]{\ifdraft{\ifcolorcomments{\textcolor{#4}{##1 --#3}}\else{\textsl{ ##1 \ --#3}}\fi}\else{}\fi}
}
\allowcomments{\comAB}{Ayreena}{A}{magenta}
\allowcomments{\comCC}{Carlo}{C}{orange}
\allowcomments{\comDS}{Stefano}{S}{green}
\def\bc{\begin{center}}
\def\ec{\end{center}}
\def\be{\begin{equation}}
\def\ee{\end{equation}}
\def\N{\mathbb N}
\def\Z{\mathbb Z}
\def\Q{\mathbb Q}
\def\R{\mathbb R}
\def\ah{\hat{\alpha}}
\def\Z{\mathbb Z}
\newtheorem{lem}{Lemma}[section]
\newtheorem{dfn}[lem]{Definition}
\newtheorem{pro}[lem]{Proposition}
\newtheorem{thm}[lem]{Theorem}
\newtheorem{con}[lem]{Conjecture}
\newtheorem{exa}[lem]{Example}
\newtheorem{cor}[lem]{Corollary}
\newtheorem{rem}[lem]{Remark}
\numberwithin{equation}{section}
\newtheorem{que}[lem]{Question}
\newtheorem{step}{Step}
\newif\ifdraft\drafttrue
\date{}
\title{Global and local minima of $\alpha$-Brjuno functions}
\author{A. Bakhtawar
\and
C. Carminati
\and
S. Marmi
}
\newcommand{\Addresses}{{
  \bigskip
 \footnotesize

  A.~Bakhtawar, \textsc{
  Centro di Ricerca Matematica Ennio De Giorgi, 
 Scuola Normale Superiore, Piazza dei Cavalieri 3,
56126 Pisa, Italy               
  }\par\nopagebreak
  \textit{E-mail address}, A.~Bakhtawar: \texttt{ayreena.bakhtawar@sns.it}

  \medskip

  C.~Carminati, \textsc{Dipartimento di Matematica, University di Pisa, Largo Bruno Pontecorvo 5, 56127
Pisa, Italy}\par\nopagebreak
  \textit{E-mail address}, C.~Carminati: \texttt{carlo.carminati@unipi.it}

  \medskip

  S.~Marmi, \textsc{Scuola Normale Superiore, Piazza dei Cavalieri 7, 56126 Pisa, Italy}\par\nopagebreak
  \textit{E-mail address}, S.~Marmi: \texttt{stefano.marmi@sns.it}

}}
\begin{document}

\maketitle

\begin{abstract}
The main goal of this article is to analyze some peculiar features of the global (and local)  minima of $\alpha$-Brjuno functions $B_\alpha$ where $\alpha\in(0,1].$ Our starting point is the result by Balazard--Martin (2020), who showed that the minimum of $B_1$ is attained at $g:=\frac{\sqrt 5 -1}{2}$; analyzing the scaling properties of $B_1$ near $g$ we shall deduce that all preimages of $g$ under the Gauss map are also local minima for $B_1$. Next we consider the problem of characterizing global and local minima of $B_\alpha$ for other values of $\alpha$: we show that for $\alpha\in (g,1)$ the global minimum is again attained at $g$, while for $\alpha$ in a neighbourhood of $1/2$ the function $B_{\alpha}$ attains its minimum at $\gamma:=\sqrt{2}-1$. 
The fact that the minimum of $B_\alpha$ is attained when $\alpha$ ranges a whole interval of parameters is non trivial. Indeed, we prove that  $B_{\alpha}$ is lower semicontinuous for all rational $\alpha,$ but  we also exhibit an irrational $\alpha$ for which $B_{\alpha}$ is not lower semicontinuous.
\end{abstract}

\section{Introduction}

Let $x\in R\setminus \Q$ and let $\left\{    \frac{p_{n}}{ q_{n} }    \right\}_{n\geq 0}$ be the sequence of the convergents of its continued fraction expansion. A Brjuno number is an irrational number $x$ such that $\sum^{\infty}_{n=0}\frac{\log q_{n+1}}{q_{n}}<\infty.$ Almost all
real numbers are Brjuno numbers, since for all Diophantine numbers one has $q_{n+1}=\mathbb{O}(q^{\tau+1}_{n})$ for some $\tau\geq 0.$ But some Liouville numbers also verify the Brjuno condition, e.g. $\sum^{\infty}_{n=0}10^{-n!}.$    The importance of Brjuno numbers comes from the study of one-dimensional
analytic small divisor problems in dimension one. 
In the case of germs of holomorphic diffeomorphisms
of one complex variable with an indifferent fixed point, extending
a previous result of Siegel \cite{Si_42}, Brjuno proved \cite{Br_71} that all germs with linear
part $\lambda=e^{2\pi i x}$ are linearizable if $x$ is a Brjuno number. The most famous results are due to Yoccoz \cite{Yo_95}, who proved
that the Brjuno condition is also necessary: if $x$  is not a Brjuno number then there exist non-linearizable analytic germs with linear part $\lambda=e^{2\pi i x}$ (indeed  one can just take the quadratic polynomial). 
Similar results hold for the local conjugacy problem of analytic diffeomorphisms of the circle
\cite{Yo_02} and for 
 some complex area–preserving maps \cite{ChMa_22, Da_94, Mar_90}. In a somewhat
different but closely related context, it is conjectured that the Brjuno conditon is optimal for
the existence of real analytic invariant circles in the standard family \cite{Mac_88,MaJa_92}.
Yoccoz’s work used a rigorous renormalization technique and the $PGL_{2}(\Z)$ action on $\R\setminus\Q$
plays an important role: he thus found convenient to reformulate the Brjuno condition in
terms of an arithmetical function, the Brjuno function, which satisfies a cocycle equation
\cite{MaMoYo_01} under this action, see \cite[Appendix 5]{MaMoYo_01}.
The set of Brjuno numbers  can be characterized as the set where the Brjuno
function $B:R\setminus \Q \to \R \cup \{ +\infty\}$ is finite, thus it is 
invariant under the action of the modular
group $PGL (2, \Z)$.  
Another important consequence of Yoccoz's work is that the Brjuno
function gives the size (modulus $L^{\infty}$ \cite{Yo_95}, and even continuous \cite{BC}, functions) of the domain of stability
around an indifferent fixed point of a quadratic polynomial. It conjecturally plays the same
role in many other small divisor problems \cite{MA_00,MaJa_92,  MaYa_02, MaCa08}.

\subsection{$\alpha$-Brjuno maps: old and new results}
Let $\alpha\in[0,1],$ $\bar{\alpha}=\max(\alpha,1-\alpha)$ so that $\frac{1}{2}\leq {\bar\alpha}\leq1$; let $$I_\alpha:=\begin{cases}
    [0,\alpha) & \alpha>1/2\\
    [0, \bar{\alpha}] & \alpha \leq 1/2.
\end{cases}$$

Let us consider  the one-parameter family of maps\footnote{$A_{\alpha}$ is just the folded version of $\tilde{A_{\alpha}}:[1-\alpha,\alpha)\to [1-\alpha,\alpha)$ (see\cite{NaNa_08}), namely $| \tilde{A_{\alpha}}(x)|=A_{\alpha}(|x|)$ where 
 $$\tilde{A_{\alpha}}=\left\vert\frac{1}{x}\right\vert-c_\alpha (x), \ \ \ \mbox{with } c_\alpha(x)=\left[  \left\vert \frac{1}{x}\right\vert   +1-\alpha\right] \ \ (c_\alpha \mbox{ is the unique integer choice so that} \tilde{A_{\alpha}}(x) \in [\alpha-1,\alpha[) .$$ }   $A_{\alpha}:I_\alpha\to I_\alpha$ defined by $A_{\alpha}(0)=0$ and 
 \begin{equation*}
 A_{\alpha}(x)=\left\vert\frac{1}{x}-\left[  \frac{1}{x} -\alpha+1 \right]\right\vert, \; \; \mbox{for } x\neq 0,
 \end{equation*}
  where  $[x]$ denotes the integer part of $x.$ The maps in the family $(A_\alpha)_{\alpha \in [0,1]}$ are a quite natural generalization of the Gauss map, which in fact appears in the family for $\alpha=1$. By the way, when dealing with a value $\alpha$ in parameter space, we shall often code it using the regular continued fraction expansion, which is the symbolic orbit of $\alpha$ under the Gauss map $A_1$.

Following \cite{MaMoYo_97},  one can introduce the  (generalized) Brjuno function (see \cite{LuMar_10,MaMoYo_97,MaMoYo_06,MoCaMa_99}, but also \cite{Yo_95} where the case $\alpha=1/2$  was originally introduced) as follows. 
If $x\in \R$ let us define $x_0\in I_\alpha$ as $x_0=|x-[x]_{\bar{\alpha}}|,$ where $[x]_{\bar{\alpha}}=[x+1-\bar{\alpha}]$; the point  $x_0$ will be called the {\em canonical representative} of $x$ in $I_\alpha$. Then we set
\begin{equation*}
    x_n:=A^n_\alpha(x_0), \ \ \ \beta_{-1}:=1, \ \ \beta_n:=x_0x_1...x_n.
\end{equation*}
Thus we can define 
\begin{equation}\label{aBFun}
B_{\alpha}(x)=\begin{cases}
    +\infty & x\in \mathbb{Q}\\
    \sum^{\infty}_{j=0} \beta_{j-1}(x)\log (1/x_{j}) & \mbox{ otherwise.}
\end{cases}
\end{equation}
By definition $B_\alpha$ is a 1-periodic function defined on the real line, and it satisfies the functional equation
\begin{equation}\label{BFE}
B_{\alpha}(x)=-\log(x)+xB_{\alpha}(A_{\alpha}(x)) \text{ for  all } x\in I_\alpha
\end{equation}
and more generally,
\begin{equation} \label{gen}
B_{\alpha}(x)=B_{\alpha}^{(K)}(x)+\beta_{K}(x)B_{\alpha}(A_{\alpha}^{K+1}(x)) \quad (K\in \N, x\in I_\alpha),
\end{equation}
where $B_{K}$ denotes the partial sum associated  to the $\alpha$-continued fraction expansion
\begin{equation}\label{PS}
B_{\alpha}^{(K)}(x)=\sum_{j=0}^{K}\beta_{j-1}{(x)}\log(1/A_{\alpha}^{j}(x)).
\end{equation}

The local properties of $B_{1}$ were studied in \cite{BaMa_12}, where the authors showed that the Lebesgue points of the Brjuno function $B_{1}$ are exactly the Brjuno numbers. Additionally, the multifractal analysis of $B_{1}$ was carried out in \cite{StMa_18}.
More recently, Balazard and Martin \cite{BaMa_20} proved that for $\alpha=1,$ the Brjuno function is lower semicontinuous and attains a global minimum.
 \begin{thm}[\cite{BaMa_20}] \label{BM} 
 Let $g=\frac{\sqrt 5 -1}{2}$ denote the golden number. Then
 $$\min_{x\in [0,1]}B_{1}(x)=B_{1}(g).$$
 \end{thm}

\begin{figure}
 \begin{center}
\includegraphics[width=0.8\textwidth]{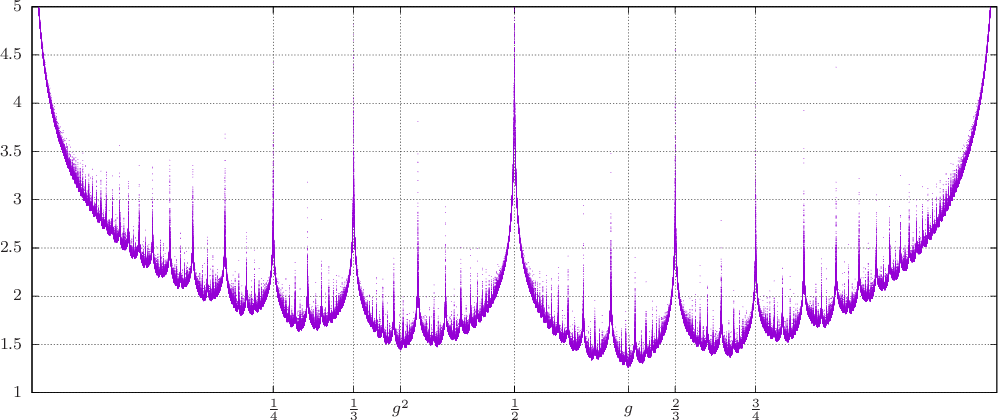}
 \caption{The graph of the  Brjuno function $B_{1}$ associated to Gauss map.}
\label{B1}
\end{center}
\end{figure}

 %

It is natural to also  try to characterize the other local minima of $B_1.$ For instance,  numerical evidence suggests that $\min_{x\in [0,1/2]}B_{1}(x)=B_{1}(g^2),$ and indeed this can be proven with an elementary computation relying on Theorem \ref{BM} (see figure \ref{B1} 
, and Corollary \ref{CBM} in Section \ref{Ca1} for a proof).

Note that both $g$ and $g^{2}$ are ``noble'' numbers in the sense of the following definition.
 \begin{dfn}\label{noble}
 Let $A_1$ be the Gauss map, the set of noble numbers $\mathcal{N}$ consists of all inverse images of $g$ under $A_1$:
 $$ \mathcal{N}=\cup_k A_1^{-k}(g).$$
 \end{dfn}

On the basis of numerical evidence, one is naturally led to believe in the following conjecture.
   \begin{con}
Let $\mathcal{M}$ denote the set of local minima of $B_{1}$. Then $$\mathcal{M}=\mathcal{N}.$$ 
  \end{con} 

We will provide a partial answer to this conjecture. The first step in this direction 
is to prove  that the Brjuno function $B_{1}$ has a cusp-like minimum at $g,$ namely
 \begin{thm}\label{AIM}
There exists $c>0$ such that
\begin{equation*}
B_{1}(x)-B_{1}(g)\geq c|x-g|^{1/2} \text{ for all } x\in(0,1).
\end{equation*}
\end{thm}

It is then not too hard to imagine that one can use the functional equation \eqref{gen} to propagate the cusp-like property of the global minimum to all other noble numbers, gaining the following result:
\begin{cor}
 Let $\nu$ be a noble number.  Then $\nu$ is a local minimum of $B_{1}.$
 \end{cor}
This proves the inclusion $\mathcal{N}\subset \mathcal{M},$ which establishes one half of the conjecture. Unfortunately, we still miss an argument to get the other inclusion.

It is natural to ask what can be said, in general, about the problem of determining the minimum of $B_\alpha$ for general $\alpha<1.$ Note that, in this case, even the existence of a global minimum is a nontrivial issue, since there exist (irrational) values of $\alpha$ for which the semicontinuity property fails (see section \ref{lscfail}). And even for rational values of $\alpha,$ when the minimum of $B_\alpha$ exists (see \ref{LSCa}), the location (and the value) of the minimum varies in a strange manner. If we set $\min_x B_\alpha(x)= B_\alpha(r_\alpha),$ numerical evidence suggests that the functions $\alpha \mapsto r_\alpha$ and $\alpha \mapsto B_\alpha(r_\alpha)$ are almost everywhere constant (see figure \ref{figureminima}). Moreover the structure of the connected components of the set on which these functions are constant seems very complicated, and a general result  still seems out of reach.
\begin{figure}[h]
 \begin{center}
 \includegraphics[width=0.8\textwidth]{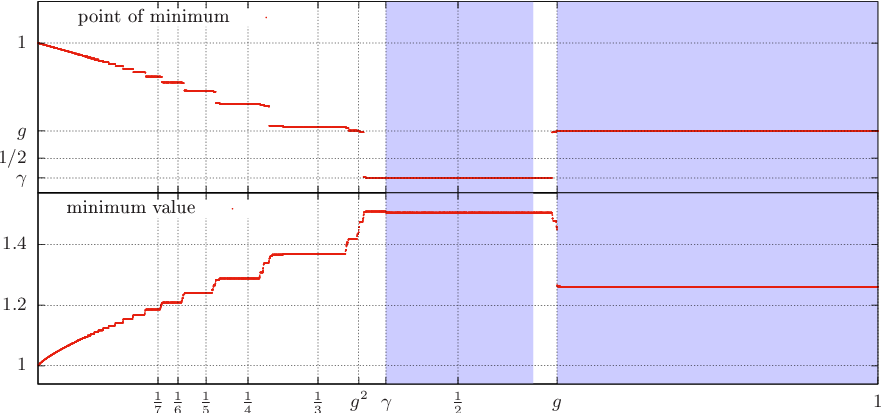}
 \caption{(Approximate) behaviour of the minimum point and minimum value as $\alpha$ ranges in $[0,1]$; shaded regions correspond to parameter values for which the numerical evidence is confirmed by our analytical results.}
\label{figureminima}
\end{center}
\end{figure}

However, we can provide some partial results that confirm some of the features observed numerically.
In the first place, we show that we can directly apply Theorem \ref{AIM} to prove that the minimum remains unchanged when $\alpha$ ranges in a left neighbourhood of $1.$ This leads to the following generalization of Theorem  \ref{BM}.
\begin{thm}\label{Bingo}
For all $\alpha\in(g,1],$ we have that $B_{\alpha}(g)=B_{1}(g)$ and $B_\alpha(x)\geq B_1(x)$. Therefore,
\begin{equation*}
\min_{x\in[0,\alpha]}B_{\alpha}(x)=B_{\alpha}(g).
\end{equation*}
\end{thm}

For these values of the parameter, $g$ is a cusp-like minimum  for $B_\alpha,$ and this property propagates by means of the functional equation (see Corollary  \ref{immediate}).


For parameter values $\alpha \le g$ we can prove the following result:  
\begin{thm}\label{monotonic}
Let\footnote{$\gamma$ is sometimes called the ``silver number"}
    $\gamma:=\sqrt{2}-1=[0;\overline{2}]$. There exists   $\gamma^*\approx 0.5895...>1-\gamma$ such that, for any  fixed $x$, the map $\alpha \mapsto B_\alpha(x)$ is monotone decreasing for $\alpha \in (\gamma, 1/2]$, monotone increasing for $\alpha \in [1/2, \gamma^*]$, and again monotone decreasing for $\alpha \in [\gamma^*,g]$. 
    Moreover, for almost every $x$ the monotonicity is strict.
\end{thm}

For $\alpha=1/2,$ we  can repeat the argument of Balazard and Martin \cite{BaMa_12} to show  that $\min_{x\in[0,1/2]} B_{1/2}(x)=B_{1/2}(\gamma)$ where $\gamma$ is as defined above; in this case also the scaling argument works in the same way as for $\alpha=1$, and this implies that $\gamma$ is a cusp-like minimum and all the preimages of $\gamma$ under $A_{1/2}$ are local minima (see section \ref{Ca2}). 

Finally, due to the Theorem \ref{monotonic}, the same argument as in the proof of Theorem \ref{Bingo} implies that $\min_{x} B_{\alpha}(x)=B_{\alpha}(\gamma)$ for all $\alpha \in (\gamma, \gamma^*)$.

The structure of the paper is the following: in Section \ref{notation}, we introduce the necessary notations and we prove some preliminary results. Section \ref{SLSC} is devoted to the study of the lower semicontinuity of the $\alpha$-Brjuno functions and some other variants of  Brjuno functions. In Section \ref{Ca1}, we present the prove of  Theorem \ref{AIM} while Theorem \ref{Bingo} is proved in Section 5. We conclude our paper with a brief discussion of numerical results and some conjectures.

\section{Notations and preliminary results }
\label{notation}

Given any $\alpha\in[0,1],$ each $x\in \R$ has a canonical representative  $x_{0} \in I_\alpha,$ defined as  $x_0=|x-[x]_{\bar{\alpha}}|.$ Let us also define the quantities
$a_{\alpha,0}=[x]_{\bar{\alpha}},$     $\epsilon_{\alpha,0}(x)=sign{(x-[x]_{\bar{\alpha}})}$. Every
$x_0\in I_{\alpha}\setminus \mathbb{Q}$ has an infinite $\alpha$-continued fraction given by the symbolic orbit of $x_0$ under the iteration of the transformation $A_{\alpha}$ as follows: for $n\geq 0,$ let us set
 $$ x_{n}=A^n_{\alpha}(x_{0}), \quad
  a_{n+1}=\left[\frac{1}{x_{n}} -\alpha+1\right], \quad \epsilon_{n+1}:= {\rm sign} (x^{-1}_n -a_{n+1}).$$
 Then
  $x_{n}^{-1}=a_{n+1}+\epsilon_{n+1}x_{n+1}  $  and 
 \begin{equation}\label{fexp}
x =a_{0}+\epsilon_{0}x_{0}=
\cdots=a_{0}+\frac{\epsilon_{0}}{a_{1}+\frac{\epsilon_{1}}{a_{2}   +\cdots  +\frac{\epsilon_{n-1}}{a_{n}+\epsilon_{n}x_{n}}   }         }= a_{\alpha,0}+\dfrac{\epsilon_{\alpha,0}}{a_{\alpha,1}+\dfrac{\epsilon_{\alpha,1}}{\ddots+\dfrac{\epsilon_{\alpha,n-1}}{a_{\alpha,n}+\dfrac{\epsilon_{\alpha,n}}{\ddots}}}}.
 \end{equation}
 We will denote the infinite $\alpha$-expansion of $x$ as $x=[(a_0,\epsilon_0);(a_{1},\epsilon_{1}),\cdots, (a_{n},\epsilon_{n}),\cdots].$ 
  Note that when $\alpha=1,$ we recover the standard continued fraction expansion defined by the iteration of the Gauss map. In that case, since $\epsilon_{n}=1$ for all $n,$ we simply omit it, recovering the classical notation of regular continued fractions. 
  When $\alpha=1/2,$ we obtain the so called nearest integer continued fraction; in this case $a_{n}\geq 2$ for all $n\geq1.$

 We shall often use the isomorphism between the group of real invertible $2\times2$ matrices and linear fractional transformations. Specifically, we define the action of a matrix on a real number $z$ as follows:
\begin{equation*}
\begin{pmatrix} a & b \\ c & d\end{pmatrix} \cdot z =\frac{az+b}{cz+d}.
\end{equation*}
Identifying matrices and fractional transformations is particularly convenient, since the product of matrices corresponds to the composition of the corresponding fractional transformations. Note also that, if $\varphi$ denotes the fractional transformation defined above, we can easily compute its derivative as:
$$\varphi'(z)=\frac{ad-bc}{(cz+d)^2}.$$
Thus, $\varphi$ is monotone increasing or decreasing accordingly to the sign of the  determinant of the corresponding matrix.

With this identification, the map $z\mapsto 1/(a+\epsilon z)$, which is the building block of $\alpha$-continued fractions, corresponds to the matrix $\begin{pmatrix} 0 & 1 \\ \epsilon & a\end{pmatrix}$ which has determinant $-\epsilon$.  If $S_n=((a_{0},\epsilon_{0});(a_{1},\epsilon_{1}),\cdots,(a_{n},\epsilon_{n}))$ are the first digits of the $\alpha$-expansion of a real value $x_0$, with a slight abuse of notation we identify $S_n$ with the product of the corresponding building blocks and define \footnote{Note that in this definition the action of the fractional transformation corresponding to the digit of index zero is different from all the others, and this is the reason why we use a semicolon (rather than a comma) to separate the digit of index zero from the rest of the expansion. When we consider the expansion of elements belonging to $I_\alpha$ we will often neglect  the  digit of index zero (simply omitting the first matrix): there hardly is any risk of confusion since in this case, the digit of index zero would correspond to the identity matrix.}
 \begin{equation}\label{stringaction}
S_n\cdot z=
\begin{pmatrix}
\epsilon_{0}\quad a_{0}\\
0 \quad 1
\end{pmatrix}
\begin{pmatrix}
{0}\quad {1}\\
\epsilon_{1} \quad a_{1}
\end{pmatrix}
\cdots
\begin{pmatrix}
{0}\quad {1}\\
\epsilon_{n} \quad a_{n}
\end{pmatrix}\cdot z.
\end{equation}
It is easy to prove by induction that 
$$S_n\cdot z=\begin{pmatrix}
{\epsilon_n p_{n-1}}\quad {p_n}\\
\epsilon_{n}q_{n-1} \quad q_{n}
\end{pmatrix}\cdot z, $$
 where the sequences $p_{n}$ and $q_{n}$ are recursively determined by the following  relation:
 \begin{equation*}
 p_{n}=a_{n}p_{n-1}+\epsilon_{n-1}p_{n-2},\quad q_{n}=a_{n}q_{n-1}+\epsilon_{n-1}q_{n-2},\quad p_{-1}={q_{-2}}=1,\quad p_{-2}=q_{-1}=0.\end{equation*}
The rational values 
 $\frac{p_{n}}{q_{n}}=S_n\cdot0=[ (a_0,\epsilon_0);(a_{1},\epsilon_{1}),\cdots,(a_{n},\epsilon_{n})            ]$ are called the {\em $\alpha$-convergents} of the value $x$.

By Binet Formula, we immediately obtain the following identity:
$$q_{n}p_{n-1}-p_{n}q_{n-1}=\epsilon_n det(S_n)=(-1)^{n}\epsilon_{0}\cdots\epsilon_{n-1} \in \{\pm 1\}.$$ 
Using \eqref{fexp}, we easily get
\begin{equation*}
x=S_n \cdot x_n = \frac{p_{n}+p_{n-1}\epsilon_{n}x_{n}}{q_{n}+q_{n-1}\epsilon_{n}x_{n}} \ \ \  \text{ hence  } \ \ \ 
x_{n}=-\epsilon_{n}\frac{q_{n}x-p_{n}}{q_{n-1}x-p_{n-1}}.
\end{equation*}
Thus for  $\beta_{n}=\prod^{n}_{i=0}x_{i}$ we get
 \begin{equation}
     \label{betan}
 \beta_{n}=\prod^{n}_{i=0}A^{i}_{\alpha}(x)=\prod^{n}_{i=0}x_{i}=(-1)^{n}(\epsilon_{1}\cdots\epsilon_{n-1})(q_{n}x-p_{n}) \text{ for } n\geq0, \text{ with } \beta_{-1}=1.
 \end{equation}

The following theorem summarizes the fundamental properties of the quantities $\beta_n$ we shall need later on:

 \begin{pro}[\cite{MaMoYo_97, MoCaMa_99}]\label{prop}
Let $\eta=\sup(\gamma,\sqrt{|1-2\alpha|}).$ Given $\alpha\in (0,1],$ for all $x\in\R\setminus\Q$ and for all $n\geq1$ one has 
 \begin{itemize}
 \item[\rm{(i)}] $q_{n+1}>q_{n}>0;$
 \item[\rm{(ii)}] $p_{n}>0$ when $x>0$ and $p_{n}<0$ when $x<0;$
 \item [\rm{(iii)}]$|q_{n}x-p_{n}|=\frac{1}{q_{n+1}+\epsilon_{n+1}q_{n}{x_{n+1}}}$ so that $\frac{1}{1+\alpha}<\beta_{n}q_{n+1}<\frac{1}{\alpha};$
 \item[\rm{(iv)}] if $g<\alpha\leq 1,$ $\beta_{n}\leq \bar{\alpha} g^{n};$
 \item [\rm{(v)}] if $0<\alpha\leq g,$ $\beta_{n}\leq \bar{\alpha} \eta^{n};$
  \end{itemize}
 \end{pro}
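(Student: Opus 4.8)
The plan is to establish Proposition~\ref{prop} by unwinding the recursion for $p_n,q_n$ together with the identities already recorded in Section~\ref{notation}, handling the five items roughly in the order (i), (ii), (iii), then (iv) and (v) together. Items (i) and (ii) are the purely combinatorial facts about the convergents: from $q_n=a_nq_{n-1}+\epsilon_{n-1}q_{n-2}$ with $a_n\ge 1$, $q_{-1}=0$, $q_0=1$, one argues by induction that $q_n>q_{n-1}>0$ for $n\ge 1$; the only subtlety is that $\epsilon_{n-1}$ can be $-1$, so one must use that $a_n\ge 2$ whenever $\epsilon_{n-1}=-1$ (a standard property of $\alpha$-continued fractions for $\alpha\le 1$, since $x_{n-1}^{-1}$ being closer to $a_n$ from below forces $a_n\ge 2$), which gives $q_n\ge 2q_{n-1}-q_{n-2}>q_{n-1}$. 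Item (ii) follows the same way from the identical recursion for $p_n$ with $p_{-1}=1$, $p_0=a_0$, noting the sign of $p_n$ is governed by the sign of $x$ through $a_0=[x-\alpha+1]$.

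For item (iii), I would start from the relation $x=\frac{p_n+p_{n-1}\epsilon_n x_n}{q_n+q_{n-1}\epsilon_n x_n}$ stated in the preamble. Cross-multiplying and using $q_np_{n-1}-p_nq_{n-1}=(-1)^n\epsilon_0\cdots\epsilon_{n-1}$ yields
\[
q_nx-p_n=\frac{-\epsilon_n\epsilon_0\cdots\epsilon_{n-1}(-1)^n x_n}{\,q_n+q_{n-1}\epsilon_n x_n\,},
\]
and since $x_{n+1}^{-1}=a_{n+1}+\epsilon_{n+1}x_{n+1}$ gives $q_{n+1}=a_{n+1}q_n+\epsilon_n q_{n-1}$ one rewrites the denominator as $q_{n+1}+\epsilon_{n+1}q_n x_{n+1}$ after substituting $x_n^{-1}=a_{n+1}+\epsilon_{n+1}x_{n+1}$; taking absolute values gives $|q_nx-p_n|=\frac{1}{q_{n+1}+\epsilon_{n+1}q_nx_{n+1}}$. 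Then $\beta_n q_{n+1}=|q_nx-p_n|q_{n+1}=\frac{q_{n+1}}{q_{n+1}+\epsilon_{n+1}q_nx_{n+1}}=\frac{1}{1+\epsilon_{n+1}(q_n/q_{n+1})x_{n+1}}$, and since $0\le x_{n+1}<\alpha\le 1$, $0<q_n/q_{n+1}<1$, and $\epsilon_{n+1}\in\{\pm1\}$, the quantity $\epsilon_{n+1}(q_n/q_{n+1})x_{n+1}$ lies in $(-\alpha,\alpha)$, which one sharpens using the precise ranges of $x_{n+1}$ (namely $x_{n+1}\in[1-\alpha,\alpha)$ or $[0,\alpha)$ depending on $\epsilon$) to get exactly $\frac{1}{1+\alpha}<\beta_nq_{n+1}<\frac{1}{\alpha}$.

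For items (iv) and (v), the idea is that $\beta_n=\prod_{i=0}^n x_i$ and each factor $x_i<\alpha$, but one needs the geometric decay with ratio $g$ (resp.\ $\gamma$), which comes from a two-step estimate: $x_ix_{i+1}$ is uniformly bounded by $g^2$ when $\alpha>g$ (resp.\ by $\gamma^2$ when $\alpha\le g$). Concretely, $x_ix_{i+1}=x_{i+1}/x_i^{-1}$-type manipulation, or more directly: from (iii), $\beta_n<\frac{1}{\alpha q_{n+1}}$ and $\beta_{n-1}>\frac{1}{(1+\alpha)q_n}$, so $x_n=\beta_n/\beta_{n-1}<\frac{(1+\alpha)q_n}{\alpha q_{n+1}}$; combined with the lower bound $q_{n+1}\ge a_{n+1}q_n+\epsilon_n q_{n-1}$ and case analysis on whether consecutive $a$'s equal $1$ (only possible when the corresponding $\epsilon=+1$), one shows $\beta_{n+1}/\beta_{n-1}=x_nx_{n+1}\le g^2$ when $\alpha>g$. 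The worst case is the sequence of all $(1,+1)$'s, which produces exactly the golden-mean scaling; when $\alpha\le g$ the digit $1$ with $\epsilon=+1$ is excluded near $\alpha$, so the extremal pattern becomes $(2,-1)$-periodic, giving the silver ratio $\gamma$. Iterating the two-step bound from $\beta_{-1}=1$ (and using $\beta_0=x_0<\alpha$) yields $\beta_n\le\alpha g^n$ (resp.\ $\alpha\gamma^n$).

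\textbf{The main obstacle} I anticipate is the careful case analysis in items (iv)--(v): controlling $x_nx_{n+1}$ requires knowing exactly which pairs $(a_n,\epsilon_n)$ can follow which, and in particular that a digit $a_n=1$ forces $\epsilon_n=+1$ and constrains $a_{n+1}$ — this "admissibility" bookkeeping for $\alpha$-continued fractions is where the golden/silver dichotomy at $\alpha=g$ actually enters, and it is the step most likely to need a separate lemma. The rest — items (i)--(iii) — is routine manipulation of the recursions and the stated identities.
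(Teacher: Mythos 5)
First, a remark on the comparison itself: the paper states this proposition as a quoted result from \cite{MaMoYo_97} and gives no proof of its own, so your proposal can only be judged on its internal merits.

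Items (i)--(iii) of your sketch are essentially correct and standard: the induction on the recursion (using that $\epsilon_{n-1}=-1$ forces $a_n\geq 2$), the cross-multiplication giving $q_nx-p_n=\frac{(-1)^n\epsilon_0\cdots\epsilon_n\, x_n}{q_n+\epsilon_nq_{n-1}x_n}$, and the substitution $x_n^{-1}=a_{n+1}+\epsilon_{n+1}x_{n+1}$ all go through. One slip worth fixing: when $\epsilon_{n+1}=-1$ the correct range is $x_{n+1}\in(0,1-\alpha]$, not $[1-\alpha,\alpha)$ as you wrote; this is not cosmetic, since the upper bound $\beta_nq_{n+1}<1/\alpha$ in (iii) requires precisely $1-\frac{q_n}{q_{n+1}}x_{n+1}>1-x_{n+1}\geq\alpha$, which fails if $x_{n+1}$ were allowed to approach $\alpha$ with $\epsilon_{n+1}=-1$.

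The genuine gap is in (iv)--(v). Your key claim that ``$x_ix_{i+1}$ is uniformly bounded by $g^2$ when $\alpha>g$'' is false: for $\alpha=1$ take $x_i$ near $3/5$, so that $x_{i+1}$ is near $2/3$ and $x_ix_{i+1}\approx 2/5>g^2\approx 0.382$. The correct mechanism is a \emph{dichotomy}, not a uniform two-step bound: either $x_i\leq g$, and that single factor already contributes $\leq g$; or $x_i>g$, in which case ($\alpha$-admissibility forces $a_{i+1}=1$, $\epsilon_{i+1}=+1$) one has $x_ix_{i+1}=x_i(x_i^{-1}-1)=1-x_i<1-g=g^2$ \emph{and} $x_{i+1}<g$. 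Grouping the factors of $\beta_n=\prod_{i=0}^n x_i$ greedily into such singletons and pairs (with a possible trailing factor bounded by $\alpha$) yields $\beta_n\leq\alpha g^n$. Your fallback route via $\beta_n<\frac{1}{\alpha q_{n+1}}$ and Fibonacci lower bounds on $q_{n+1}$ gives the right exponential rate but not the stated constant $\alpha$ uniformly in $\alpha\in(g,1]$.

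For (v) the situation is worse: the ``$(2,-1)$-periodic'' pattern you propose as extremal is not an admissible infinite word (it is the expansion of the endpoint $1$, and the associated orbit $x_i\to 1$ produces \emph{no} decay at all); the silver number is $\gamma=[(2,+1),(2,+1),\dots]$. Moreover the analogous two-step bound $x_ix_{i+1}\leq\gamma^2$ also fails: if $x_i\in(\gamma,\alpha)$ with $\alpha$ close to $g$, then $x_ix_{i+1}=|1-2x_i|$ can be as large as $2g-1=g^3\approx 0.236>\gamma^2\approx 0.172$, so one must push the case analysis one step further and exploit that $\epsilon_{i+1}=-1$ forces $x_{i+1}\leq 1-\alpha$ and constrains $a_{i+2}$. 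You correctly flag the admissibility bookkeeping as the main obstacle, but the specific quantitative claims you make to discharge it are wrong as stated, so (iv)--(v) are not proved by the proposal in its current form.
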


Note that the map $A_{\alpha}$ has countably many branches:
$$
 A_{\alpha}(x)=
 \begin{cases}
 \frac{1}{x}-k  &   \text{ for} \quad   \frac{1}{k+\alpha} <x  \leq \frac{1}{k},   \\
 k-\frac{1}{x} & \text{ for} \quad \frac{1}{k}<x\leq \frac{1}{k
 +\alpha-1}.
 \end{cases}
 $$
In fact the digits $(a_{i+1}, \epsilon_{i+1})$ determine the position of $x_i$ with respect to the partition determined by the branches of $A_\alpha$.

A sequence $ ( (a_{0},\epsilon_{0});(a_{1},\epsilon_{1}),\cdots)$ 
is called an \textit{admissible sequence} if the continued fraction $
    [   (a_{0},\epsilon_{0});(a_{1},\epsilon_{1}), \cdots ,(a_{n},\epsilon_{n}),\cdots,            ]$ represents the $\alpha$-continued fraction expansion of some irrational number $x\in I_{\alpha}.$ For any $n\geq 1,$ an \textit{admissible block} is defined as the finite truncation of an admissible sequence. Specifically, it  consists of the first $n+1$ terms of the admissible sequence expressed as  $S:=( (a_{0},\epsilon_{0});(a_{1},\epsilon_{1}),\cdots,(a_{n},\epsilon_{n})).$ We will also use the notation $|S|=n$.

For any $n\geq 1,$ we define the set $\mathcal L_{n}$ as follows     %
   $$ \mathcal L_{n}=\{[ (a_{0},\epsilon_{0})  ;(a_{1},\epsilon_{1}),\cdots,(a_{n},\epsilon_{n})            ]:   S=((a_{0},\epsilon_{0}); (a_{1},\epsilon_{1}),\cdots,(a_{n},\epsilon_{n}))
    \text{ is an admissible block} 
    \} $$


and let 
$$ \mathcal L=\bigcup_{n=1}^{\infty} \mathcal L_{n}.$$

Given any $n\geq 1$ and $S\in \mathcal L,$ we  define $I_{S}$ as the $n$-th cylinder generated by $S$, namely the set of all real numbers $x\in I_\alpha$ whose $\alpha$-continued fraction expansion begins with the string $S.$ 
Note that, for each admissible block $S$, the expression $S\cdot x$ represents the number obtained by appending the string $S$ to the beginning of the $\alpha$-continued fraction expansion of $x$; we denote this action by $\varphi_{S}$: $\varphi_{S}(x):=S\cdot x.$ Then $I_S=\varphi_S(J)$ for some interval $J\subset I_\alpha$ with non-empty interior. Note that $J$ need not coincide with $I_\alpha$, unless all branches of $A_\alpha$ are surjective (which is actually the case for $\alpha \in \{0,1/2,1\}$). 

When $x$ ranges inside a $n$--cylinder $I_S$, by equation \eqref{betan} the quantities $\beta_{n}, \beta_{n-1}$ are affine functions of $x$, and $x_n=\beta_n/\beta_{n-1}$; thus the general term  $\beta_{n-1}\log   \frac{1}{x_n}$ in the sum \eqref{aBFun} defining $B_\alpha$ has a form of the type $\omega(x)=A(x)[\log A(x)-\log B(x)]$ with $A,B$ affine functions of $x$, and we can easily check that this function is convex on its domain since its second derivative is always positive (on the domain of $\omega$):
\begin{equation}\label{generalterm}
    \omega'(x)=A'\log A-A'\log B+\frac{A'B-B'A}{B}, \ \ \ \ \ \omega''(x)=\frac{(A'B-B'A)^2}{AB^2}.
\end{equation}

If $r\in\Q$ then the $\alpha$-expansion of $r$ is finite: indeed  to determine the first partial quotient of $r\in\Q$
we write $r=a_{0}+\epsilon_{0}r_{0}$ with $r_{0}\in I_\alpha$, we can then compute iteratively the other partial quotients applying repeatedly the map $A_{\alpha}$ to $r_{0}:$ 
we get a (finite) sequence of rational values $A_{\alpha}^k(r_0)=\frac{|p_{k}|}{q_{k}}$ with
 $$|p_k|<q_k, \ \ \ \ \ \alpha-1 \leq \frac{q_{k-1}}{p_{k-1}}-a_k<\alpha, \ \ \ \  1\leq q_k=|p_{k-1}| <q_{k-1}.$$ 
 The last property shows that this algorithm will end in a finite number of steps. Indeed by finite descent we will eventually  get $q_k=1,$ $p_k=0,$ which means that $A_{\alpha}^k(r_0)= 0$.

Furthermore, as in the case $\alpha=1$, for all $\alpha\in (0,1),$  every $r\in\Q$ admits exactly two $\alpha$ continued fraction expansions i.e. 
\begin{lem}\label{cilindri}
Given $r\in\Q$ there exist two distinct admissible sequences $S, S'\in \mathcal L$ such that $S\cdot 0=r=S'\cdot0.$
Moreover, considering the following maps
$$ \varphi_{S}: y \mapsto S\cdot y  \; \; \mbox{ and }  \; \; \varphi_{S'}: y \mapsto S'\cdot y,$$
\begin{enumerate}
\item[\textnormal{(i)}] one is orientation preserving and the other is orientation reversing on a right neighbourhood of zero,
\item[ \textnormal{(ii)}] if $y>0$ is sufficiently small, all values of the form $S\cdot y $ (resp. $S'\cdot y $) have an $\alpha$-expansion starting with $S$ (resp. $S'$),
\item[\textnormal{(iii)}] $S$ and $S'$ can be used to parametrize the corresponding cylinders, i.e. there exist $\delta, \delta'>0$  such that
$$
I_S=\{S\cdot y , \,\,\, 0<y<\delta\} \,\,\,\,\,\,\,
I_{S'}=\{S'\cdot y , \,\,\, 0<y<\delta'\}.
$$
Hence $r$ is the separation point between the cylinders $I_S$ and $I_{S'}$.
\end{enumerate}

\end{lem}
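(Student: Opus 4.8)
The plan is to reduce everything to the two finite $\alpha$-expansions of a rational and then to a local analysis of the corresponding Möbius maps near $0$. First I would invoke the algorithm described just above the statement: starting from $r=a_0+\epsilon_0 r_0$ with $r_0\in[0,\alpha)$ and iterating $A_\alpha$, one obtains a finite admissible block; the ambiguity is exactly the standard one for continued fractions, namely that the last digit $(a_n,\epsilon_n)$ can be replaced by a pair of digits, e.g. $(a_n,+1)$ by $(a_n+1,-1),(2,-1)$-type tails or $(a_n,-1)$ by $(a_n-1,+1),(\dots)$, whichever keeps admissibility; concretely, the two expansions arise because the endpoint $0$ of the interval $[0,\alpha)$ and the ``other'' endpoint give the same rational value when fed into a neighbouring cylinder map. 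I would make this precise by exhibiting $S$ and $S'$ explicitly as the two truncations corresponding to writing $r_0^{(\text{last})}$ either as $0$ under one branch or as a boundary value under the adjacent branch, and checking $S\cdot 0 = r = S'\cdot 0$ directly from the matrix product in \eqref{stringaction} together with the convergent recursion $p_n=a_np_{n-1}+\epsilon_{n-1}p_{n-2}$, $q_n=a_nq_{n-1}+\epsilon_{n-1}q_{n-2}$.

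For part (i), note that $\varphi_S$ is a composition of the fractional linear maps attached to the digits; each such matrix has determinant $\pm\epsilon_i$, so the orientation of $\varphi_S$ on a right neighbourhood of $0$ is governed by the sign of $(-1)^{?}\epsilon_0\cdots\epsilon_{n}$ (read off from $q_np_{n-1}-p_nq_{n-1}=(-1)^n\epsilon_0\cdots\epsilon_{n-1}$ and the fact that the final matrix flips orientation). The key point is that passing from the $S$-tail to the $S'$-tail changes the parity of the number of sign-reversing factors by exactly one — this is what ``two expansions'' means — hence $\varphi_S$ and $\varphi_{S'}$ have opposite orientation near $0$. I would also record that $\varphi_S'(0)\neq 0$ (the derivative equals $1/q_n^2$ up to sign, by the usual Möbius derivative formula $\frac{d}{dy}\frac{p_n+p_{n-1}y}{q_n+q_{n-1}y}=\frac{\pm\epsilon_0\cdots\epsilon_{n-1}}{(q_n+q_{n-1}y)^2}$), so each $\varphi_S$ is a local diffeomorphism at $0$, which gives the bi-Lipschitz parametrization needed for (iii).

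For parts (ii) and (iii): by definition of the cylinder $I_S$, a point $x$ lies in $I_S$ iff its $\alpha$-expansion begins with $S$, and by construction $S\cdot y$ for $y\in(0,\alpha)\setminus\Q$ close to $0$ has its first digits equal to $S$ followed by the expansion of $y$ — this is just the shift/coding compatibility built into \eqref{stringaction}. Since $\varphi_S$ is a homeomorphism from a right neighbourhood $[0,\delta)$ of $0$ onto a half-open interval with endpoint $r=\varphi_S(0)$, and similarly $\varphi_{S'}$ maps $[0,\delta')$ onto a half-open interval with the same endpoint $r$ but lying on the opposite side (by the orientation statement (i)), the two images are $I_S$ and $I_{S'}$, abutting precisely at $r$. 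Thus $r$ is the common boundary point, proving the last assertion.

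The main obstacle I anticipate is purely bookkeeping: writing down the two tails $S,S'$ in a way that is uniformly valid across all $\alpha\in(0,1)$ and all rationals $r$, including the boundary/degenerate cases (small denominators, $a_n=1$ or $a_n=2$ forced by admissibility at the extreme values of $\alpha$, the digit-of-index-zero matrix being of a different shape). In particular one must check that the ``alternative'' last digit one produces is still admissible for the given $\alpha$ — i.e. that it satisfies the constraint $\alpha-1\le \frac{q_{k-1}}{p_{k-1}}-a_k<\alpha$ — and this is where a short case analysis on the position of $r_0$ inside $[0,\alpha)$ is unavoidable. Everything else (the orientation count, the derivative computation, the coding compatibility) is the standard Möbius/continued-fraction toolkit and should go through as in the classical $\alpha=1$ case.
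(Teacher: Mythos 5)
The paper does not actually give a proof of this lemma --- it is stated as a routine consequence of the termination algorithm described just above it --- so your proposal has to be judged on its own terms. The overall architecture (finite expansion of $r$, a second expansion, local Möbius analysis near $0$) is the right one, but two of your concrete steps have genuine gaps.

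First, your recipe for producing $S'$ by modifying the last digit (the analogue of $[\ldots,a_n]=[\ldots,a_n-1,1]$) fails for general $\alpha$. Take $\alpha=1/2$ and $r=2/5$: the algorithm gives $2/5=[(3,-1),(2,+1)]$, and since the digit $1$ is inadmissible for the nearest-integer map, no modification of the last digit is available; the second expansion is in fact $2/5=[(2,+1),(2,+1)]$, which differs from the first in the \emph{first} digit. The robust construction is to define $S$ and $S'$ as the (eventually stabilizing) itineraries of $r+\eta$ and $r-\eta$ for small $\eta>0$, i.e.\ as the two one-sided codings of $r$; showing that these itineraries do stabilize is exactly the content of (ii). Second, (ii) is the heart of the lemma and you dispose of it as ``coding compatibility built into \eqref{stringaction}'': what actually needs checking is that for all $k<|S|$ and all sufficiently small $y>0$ one has $\left[\frac{1}{(\sigma^k S)\cdot y}-\alpha+1\right]=a_{k+1}$ with the correct sign $\epsilon_{k+1}$, i.e.\ that the branch conditions defining $S$ hold on an open one-sided neighbourhood; this is not automatic, because the $A_\alpha$-orbit of $r$ passes through branch endpoints, and it is precisely where the admissibility constraints and the distinction between $S$ and $S'$ enter. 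Once (ii) is established, (i) follows cleanly and non-circularly: $\varphi_S,\varphi_{S'}$ are Möbius with nonzero derivative at $0$ (your determinant computation), their images of $(0,\delta)$ lie in the disjoint cylinders $I_S$ and $I_{S'}$ and both have $r$ as an endpoint, hence lie on opposite sides of $r$, so the two maps have opposite orientations. As written, your justification of (i) (``passing from the $S$-tail to the $S'$-tail changes the parity by exactly one --- this is what two expansions means'') is an assertion rather than a proof, and the case analysis you defer to the final paragraph is where the lemma actually lives.
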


\subsection{Behaviour of $B_{\alpha}$ near rational points.}

\begin{lem}\label{P1}
Let $\frac{p}{q}
\in\Q,$ then $
B_{\alpha}(x)\to\infty \text{ as }  x\to \frac{p}{q}$ for all $\alpha\in(0,1].$
\end{lem}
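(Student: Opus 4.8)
The plan is to exploit the functional equation \eqref{gen} together with the fact that near a rational $p/q$ the $\alpha$-expansion stabilizes along a long common prefix. Let me reduce to the key case $p/q \in (0,\alpha)$; the general case follows by $\Z$-periodicity of $B_\alpha$ and the cocycle relation under the modular group (equivalently, one peels off the index-zero digit $a_0$ and the sign $\epsilon_0$, which only changes $B_\alpha$ by a bounded affine factor on a neighborhood). By Lemma \ref{cilindri} there are exactly two admissible blocks $S=((a_0,\epsilon_0);\dots;(a_k,\epsilon_k))$ and $S'$ with $S\cdot 0 = p/q = S'\cdot 0$, and these parametrize the two cylinders $I_S$, $I_{S'}$ abutting at $p/q$ via the Möbius maps $\varphi_S$, $\varphi_{S'}$. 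So any $x$ close to $p/q$ lies (say) in $I_S$, hence $x = \varphi_S(y)$ for some small $y>0$, and its $\alpha$-expansion has $S$ as a prefix; concretely $A_\alpha^{k+1}(x) = y \to 0^+$ as $x \to p/q$.

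Now apply \eqref{gen} with $K=k$:
\begin{equation*}
B_\alpha(x) = B_\alpha^{(k)}(x) + \beta_k(x)\, B_\alpha\!\left(A_\alpha^{k+1}(x)\right) = B_\alpha^{(k)}(x) + \beta_k(x)\, B_\alpha(y).
\end{equation*}
Along the cylinder $I_S$ the digits $(a_0,\epsilon_0),\dots,(a_k,\epsilon_k)$ are fixed, so the partial sum $B_\alpha^{(k)}(x)$ and the product $\beta_k(x)=\prod_{i=0}^k A_\alpha^i(x)$ depend continuously (indeed real-analytically) on $x$ as $x\to p/q$; in particular $B_\alpha^{(k)}(x)$ stays bounded and $\beta_k(x) \to \beta_k(p/q) =: c > 0$, the positivity being clear since each $A_\alpha^i(p/q)\in(0,\alpha)$ is a nonzero rational for $i\le k$ (it equals $|p_i|/q_i$ with $q_i\ge 1$). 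The remaining term is $B_\alpha(y)$ with $y\to 0^+$, and from the defining series \eqref{aBFun} the very first term is $\beta_{-1}\log(1/y) = \log(1/y) \to +\infty$, while all subsequent terms $\beta_{j-1}(y)\log(1/A_\alpha^j(y))$ are nonnegative (each $A_\alpha^j(y)\in(0,\alpha]\subset(0,1)$, so its log is negative and $\beta_{j-1}>0$). Hence $B_\alpha(y) \ge \log(1/y) \to +\infty$, and therefore $B_\alpha(x) = B_\alpha^{(k)}(x) + \beta_k(x) B_\alpha(y) \to +\infty$ as $x\to p/q$ through $I_S$. The same argument applied with $S'$ handles approach through $I_{S'}$, and by Lemma \ref{cilindri}(iii) every sufficiently close $x$ lies in $I_S \cup I_{S'} \cup \{p/q\}$, so the two-sided limit is $+\infty$.

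The only genuinely delicate point is the reduction step, i.e. making sure the finitely many "head" contributions $B_\alpha^{(k)}(x)$ and $\beta_k(x)$ are controlled uniformly as $x\to p/q$: one must check that the Möbius map $\varphi_S$ is well-defined and non-degenerate at $y=0$ (no pole), so that $\beta_k(x)$ does not degenerate to $0$ — this is exactly what Lemma \ref{cilindri} guarantees, since $\varphi_S$ restricted to a right neighborhood of $0$ is a homeomorphism onto (a piece of) $I_S$ with $\varphi_S(0)=p/q \in (0,\alpha)$, so its derivative there is finite and nonzero, giving $\beta_k(p/q)\ne 0$. Everything else is a routine continuity-plus-positivity estimate, and no nontrivial lower bound on $B_\alpha$ is needed beyond the trivial "first term of a nonnegative series" bound $B_\alpha(y)\ge\log(1/y)$.
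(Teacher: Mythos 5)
Your proof is correct and follows essentially the same route as the paper: both use Lemma \ref{cilindri} to write $x=S\cdot y$ and reduce the divergence to the single term $\beta_{n-1}(x)\log(1/y)$, with $\beta_{n-1}$ bounded away from zero along the cylinder. The only difference is cosmetic: the paper extracts that term directly from the defining series and quantifies $y$ in terms of $|x-p/q|$ via the mean value theorem (yielding an explicit lower bound that is reused in Remark \ref{ch}), whereas you package the same estimate through the functional equation \eqref{gen} together with the qualitative fact that $y\to 0^{+}$.
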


\begin{proof}
Let $S, S' \in \mathcal{L}$  be the two expansions of $\frac{p}{q}$ as in Lemma \ref{cilindri} such that $\frac{p}{q}=S\cdot0=S'\cdot0$ and $I_S\cup\frac{p}{q}\cup I_{S'}$ is a (punctured) neighbourhood of $\frac{p}{q}.$
Moreover,  suppose $x \in I_S,$ we  can write $x=S\cdot y$ where $y>0.$ Then
\begin{equation}\label{chh}
B_{\alpha}(x)\geq \beta_{n-1}(x)\log x_{n}^{-1}=\beta_{n-1} (S\cdot y)\log y^{-1}.
\end{equation}
Applying Lagrange intermediate value theorem we can find $\xi\in(0,y)$ such that
\begin{align*}
\left\vert x-\frac{p}{q} \right\vert=\left\vert S\cdot y-S\cdot 0\right\vert=\left\vert \varphi^{'}_{S}(\xi)\right\vert y\geq \min_{\xi\in(0,y)} \frac{1}{(q_{n}+q_{n-1}\epsilon_{n}\xi)^2}y\geq \frac{1}{4q_{n}^2}y.
\end{align*}
Therefore 
\begin{equation}\label{y}
y^{-1}\geq \frac{|x-\frac{p}{q}|^{-1}}{4q^2}
\end{equation}
 since  $q_{n}=q.$ By using \eqref{chh}, \eqref{y} and the fact that $\beta_{n-1}(S\cdot y)>\frac{1}{(\alpha+1)q}$ we obtain
\begin{equation*}
B_{\alpha}(x)\geq \frac{1}{(\alpha+1)q}(\log|x-\frac{p}{q}|^{-1}-\log(4q^2))\to\infty, \text{ as } x\to\frac{p}{q}.\end{equation*}

The same argument applies on $I_{S'}$ gaining the same result on the other half neighbourhood of $p/q$.
\end{proof}

\section{Lower semicontinuity of generalized Brjuno functions}
\label{SLSC}

In this section, we will discuss the lower semicontinuity of  $B_\alpha$   for different choices of $\alpha,$ since this property is necessary to repeat the argument of \cite{BaMa_12} for general $B_\alpha.$ We will prove that if $\alpha$ is rational, then $B_{\alpha}$ is lower semi-continuous. We also point out that the hypothesis $\alpha\in\Q$ cannot be dropped, since there exist irrational values $\alpha\in(1/2,1)$ for which $B_{\alpha}$ is not lower semi-continuous.

Using the lower semicontinuity, we also show that the intermediate value property holds for $B_{\alpha}$ when $\alpha$ is rational. This result is not necessary for our main result, but we believe it has its own interest.

We conclude this section by observing that the semicontinuity property can also be proved for more general Brjuno functions.

\subsection{Lower semicontinuity of $B_{\alpha}$ for $\alpha\in(0,1]\cap\Q.$}

\begin{lem}
For all $\alpha\in (0,1]\cap\Q$ the partial sum $B^{K}_{\alpha}(x)$ defined in \eqref{PS} is  infinitely differentiable (i.e. smooth) on every $K$-cylinder .
\end{lem}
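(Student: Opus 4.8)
The plan is to observe that, once we restrict to a fixed $K$-cylinder, every term of the partial sum \eqref{PS} is an explicit elementary function of $x$ — an affine map, a composition of Möbius maps, or a logarithm of such — and that none of these ingredients degenerates on the cylinder, so that the finite sum is automatically $C^{\infty}$ there. Concretely, I would fix $K$ and an admissible block $S=((a_{0},\epsilon_{0});(a_{1},\epsilon_{1}),\dots,(a_{K},\epsilon_{K}))$ and work on $I_{S}$. By definition of $I_{S}$ the first $K+1$ partial quotients and signs of every $x\in I_{S}$ are those of $S$, so the convergents $p_{j},q_{j}$ for $0\le j\le K$ are constant on $I_{S}$, determined by $S$ through the recursion of Section~\ref{notation}. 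With $\alpha\in\Q$ the set $I_{S}$ is the irrational part of an honest open interval $J_{S}$ on which, for each $0\le j\le K$, the iterate $A_{\alpha}^{j}$ agrees with one fixed composition of branches of $A_{\alpha}$; thus ``$B_{\alpha}^{(K)}$ smooth on $I_{S}$'' can be read as ``smooth on $J_{S}$'', and this is the only point where rationality of $\alpha$ is (mildly) invoked.

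Next I would make the two function classes explicit. From \eqref{beta}, for $1\le j\le K$ and $x\in I_{S}$ we have $\beta_{j-1}(x)=(-1)^{j-1}(\epsilon_{0}\cdots\epsilon_{j-2})(q_{j-1}x-p_{j-1})$ (with $\beta_{-1}\equiv 1$), which is the restriction to $I_{S}$ of an affine function of $x$, hence $C^{\infty}$ on $J_{S}$. From the identity $x_{n}=-\epsilon_{n}\tfrac{q_{n}x-p_{n}}{q_{n-1}x-\epsilon_{n}p_{n-1}}$ stated in the introduction, for $1\le j\le K$ the iterate $A_{\alpha}^{j}$ coincides on $I_{S}$ with a fractional-linear map with constant coefficients (and $A_{\alpha}^{0}=\mathrm{id}$). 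The point to check is that this Möbius map has, on $J_{S}$, neither a pole nor a zero: it agrees on the dense subset $I_{S}$ with $x\mapsto x_{j}\in[0,\alpha]$, which is bounded, so it cannot be unbounded near any point of $\overline{J_{S}}$ and therefore its pole $x=\epsilon_{j}p_{j-1}/q_{j-1}$ lies outside $\overline{J_{S}}$; being then continuous and monotone on $J_{S}$ it maps $J_{S}$ onto an open subinterval of $[0,\alpha]$, hence stays strictly positive. (One can be even more explicit for $j<K$: there $A_{\alpha}^{j}(x)=x_{j}$ lies in the $1$-cylinder fixed by the digit $(a_{j+1},\epsilon_{j+1})$, hence in a compact subinterval of $(0,1)$ bounded away from $0$; only the last iterate $x_{K}$ is merely known to be in $(0,\alpha)$.) Consequently $\log(1/A_{\alpha}^{j}(x))=-\log A_{\alpha}^{j}(x)$ is the composition of $\log$ with a strictly positive $C^{\infty}$ function on $J_{S}$, hence $C^{\infty}$.

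Finally I would assemble the pieces: $B_{\alpha}^{(K)}=\sum_{j=0}^{K}\beta_{j-1}\cdot\log(1/A_{\alpha}^{j})$ is a finite sum of products of $C^{\infty}$ functions on $J_{S}$, hence $C^{\infty}$ on $I_{S}$. I expect the only real obstacle to be the bookkeeping in the previous paragraph, i.e. verifying that all the logarithms keep a positive argument and that the Möbius representations stay pole-free on the whole cylinder; both are controlled by the single fact that the finite orbit $(A_{\alpha}^{j}(x))_{0\le j\le K}$ stays in the compact set $[0,\alpha]$, together with the observation that the intermediate iterates ($j<K$) are trapped in $1$-cylinders of fixed, finite leading digit and so cannot approach $0$. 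It is worth noting that the lemma is sharp: $B_{\alpha}^{(K)}$ need not extend continuously to $\overline{J_{S}}$, because at the endpoint of $J_{S}$ where $x_{K}\to 0$ the term $\beta_{K-1}(x)\log(1/x_{K})$ blows up (here $\beta_{K-1}$ stays bounded below by Proposition~\ref{prop}(iii)) — which is precisely the mechanism behind the lower semicontinuity statements developed in the rest of this section.
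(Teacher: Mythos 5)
Your argument is correct and follows essentially the same route as the paper's proof: on a fixed $K$-cylinder the iterates $A_{\alpha}^{j}$ ($0\le j\le K$) are given by fixed M\"obius transformations and the $\beta_{j-1}$ by fixed affine functions, so the finite sum $B_{\alpha}^{(K)}$ is a finite combination of smooth functions and logarithms of positive smooth functions. The only difference is cosmetic --- the paper parametrizes the cylinder by $y\mapsto S\cdot y$ and uses the shifted blocks $\sigma^{j}S$, whereas you work directly in the variable $x$ and spell out the (correct) verification that the M\"obius representatives are pole-free and positive on the cylinder.
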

\begin{proof}
Let $S\in \mathcal{L}_{K}$ and $\delta >0$ as in Lemma \ref{cilindri}, (iii). Since $\varphi_{S}:(0,\delta)\xrightarrow{ \sim} I_{S}$ is well defined,  it is sufficient  to show  that $B^{K}_{\alpha}(S\cdot y)$ is infinitely differentiable  in $y.$ For each $j$ such that $0\leq j\leq K,$  the term $A_\alpha^{j}(x)=(\sigma^{j}S)\cdot y$ is smooth in $y$; here, $\sigma^{j}$ denotes a shift map acting on $S$.  
Consequently,  $\beta_{j}(x)$ is also smooth in y. The sum $\sum^{K}_{j=0}\beta_{j-1}(S\cdot y)\log(1/(\sigma^{j}S)\cdot y)$ is  a finite sum of infinitely differentiable  functions. Therefore, it is also smooth in $y$ for $y\in(0,\delta).$
\end{proof}

\begin{pro}\label{LSCa}
Let $\alpha\in \Q.$ Then $B_{\alpha}$ is lower semi continuous.
\end{pro}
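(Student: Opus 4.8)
The plan is to prove lower semicontinuity by separating the Brjuno sum into a controlled ``head'' and a nonnegative ``tail,'' exploiting the decomposition \eqref{gen}. Fix $x_0\in(0,1)$; I may assume $x_0$ irrational, since at rationals $B_\alpha=+\infty$ and Lemma \ref{P1} already gives $B_\alpha(x)\to+\infty$, so lower semicontinuity is automatic there. Given $M<B_\alpha(x_0)$ (if $B_\alpha(x_0)=+\infty$, given any $M>0$), I will choose $K$ large enough that the partial sum $B_\alpha^{(K)}(x_0)>M$ — this is possible because $B_\alpha^{(K)}(x_0)\nearrow B_\alpha(x_0)$ as $K\to\infty$, each term $\beta_{j-1}\log(1/A_\alpha^j(x))$ being nonnegative on $(0,\alpha)$ (recall $A_\alpha^j(x)\in[0,\alpha]\subseteq[0,1)$, so the logarithm is positive), and likewise the tail $\beta_K(x)B_\alpha(A_\alpha^{K+1}(x))\geq 0$. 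Hence for every $x$ in a neighbourhood of $x_0$ we have the one-sided bound $B_\alpha(x)\geq B_\alpha^{(K)}(x)$.

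Next I invoke the previous lemma: for $\alpha\in\Q\cap[1/2,1]$, the partial sum $B_\alpha^{(K)}$ is smooth (in particular continuous) on each $K$-cylinder. Since $x_0$ is irrational it lies in the interior of the $K$-cylinder $I_S$ determined by the first $K$ digits of its $\alpha$-expansion (the endpoints of $K$-cylinders are rational, by the discussion preceding Lemma \ref{cilindri}), so there is a neighbourhood $U$ of $x_0$ with $U\subseteq I_S$ on which $B_\alpha^{(K)}$ is continuous. Shrinking $U$ so that $B_\alpha^{(K)}(x)>M$ for all $x\in U$, I conclude $B_\alpha(x)\geq B_\alpha^{(K)}(x)>M$ on $U$. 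As $M<B_\alpha(x_0)$ was arbitrary (resp. $M$ arbitrarily large), this is exactly lower semicontinuity at $x_0$.

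The one genuinely delicate point — and where the hypothesis $\alpha\in\Q$ is used — is the claim that an irrational $x_0$ lies in the \emph{interior} of its $K$-cylinder and that the cylinder structure is locally rigid, i.e. that nearby points share the first $K$ digits. For rational $\alpha$ the matching/finiteness phenomenon recorded in Lemma \ref{cilindri} guarantees that cylinder endpoints are rational and that the partition of $(0,\alpha)$ into $K$-cylinders is locally finite near any irrational point; this is precisely what can fail for irrational $\alpha$ (the forthcoming counterexample), so it must be quoted carefully rather than taken for granted. Modulo that structural input, the argument is the standard ``lower semicontinuity of a sup of continuous functions'' once one observes $B_\alpha=\sup_K B_\alpha^{(K)}$ with each $B_\alpha^{(K)}$ continuous at every irrational point.
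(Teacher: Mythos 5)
Your argument is correct and follows essentially the same route as the paper: both rest on writing $B_\alpha=\sup_K B_\alpha^{(K)}$ with the tail nonnegative, and on the continuity of $B_\alpha^{(K)}$ on the open $K$-cylinder containing an irrational point (the paper phrases this as closedness of the sublevel sets $D_{K,c}=\{x: B_\alpha^{(K)}(x)\le c\}$, while you work directly with the pointwise definition of lower semicontinuity, which is equivalent). Your explicit remark on where $\alpha\in\Q$ enters --- namely that irrational points lie in the interior of their cylinders, which is exactly what fails in the paper's irrational-$\alpha$ counterexample --- is a welcome clarification rather than a deviation.
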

\begin{proof}
Let $D_{c}=\{x\in \R: B_{\alpha}(x)\leq c\}$ where $c>0$. To show that $B_{\alpha}(x)$ is lower semicontinuous we need to show $D_{c}$ is closed for all $c\in\R.$ Note that $B_{\alpha}(x)=\sup_{K\to\infty}B^{K}_{\alpha}(x)$ 
where $ B^{(K)}_{\alpha}(x)$ is defined in \eqref{PS}.

Therefore we can rewrite 
$D_{c}=\bigcap_{K\in\N}D_{K,c}$ where $D_{K,c}=\{x\in \R: B_{\alpha}^{(K)}(x)\leq c\}.$ Thus it is enough to show 
$D_{K,c}$ is closed for all $c$ and for all $K\in\N.$ Indeed if $(x_{n})_{n\in\N}\in D_{K,c}$ such that $x_{n}\to\bar{x}$ then 
by Lemma \ref{P1} $\bar{x}\notin \Q.$ That implies $A^{j}_{\alpha}$ is continuous at $\bar{x}$ for all $1\leq j\leq K.$ Hence $B^{K}_{\alpha}$ is continuous at $\bar{x}$ and $\bar{x}\in I_{S}$ for some $K$-cylinder. Since $I_{S}$ is open and $ x_{n}\to\bar{x}$ there exist $n_{0}$ such that $x_{n}\in I_{S}$ for every $n\geq n_{0}.$ By the fact that $x, \bar{x}\in I_{S}$ we can write $\bar{x}=S\cdot\bar{y},$ $x_{n}=S\cdot y_{n},$  with $y_{n}\to\bar{y}$ for all $n\geq n_{0}.$ By the continuity of $B^{K}_{\alpha}$ at $\bar{x}$ it follows that $B^{K}_{\alpha}(S\cdot y_{n})\to B^{K}_{\alpha}(S\cdot \bar{y})$ as $n\to\infty.$ Hence $B^{K}_{\alpha}(x_{n})\to B^{K}_{\alpha}(\bar{x})$ and $B^{K}_{\alpha}(\bar{x})\leq c$ which implies $\bar{x}\in D_{K,c}.$ Thus $D_{K,c}$ is closed and  consequently $D_{c}$ is closed.
\end{proof}

Since $B_\alpha$ is lower semicontinuous and 1-periodic, it can be considered as a function on the circle (which is compact), hence admits an absolute minimum. Without loss of generality, we can think that this minimum belongs to the period $[\alpha-1,\alpha)$, and by the symmetry on $[\alpha-1,1-\alpha]$ one can find a global minimum on $(0, \bar{\alpha})$.
\begin{cor}
If $\alpha$ is rational, the Brjuno function $B_{\alpha}$ has a global  minimum on $[0,\bar{\alpha}].$
\end{cor}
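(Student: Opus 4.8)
The plan is to combine the lower semicontinuity of $B_\alpha$ (Proposition~\ref{LSCa}) with the compactness of the circle to get that the infimum is attained somewhere, and then to use the reflection symmetry of the $\alpha$-continued fraction near $0$ to push the resulting minimizer into $[0,\alpha]$.

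First I would observe that $B_\alpha$ is $\Z$-periodic: in \eqref{aBFun} the quantity $x_0=|x-[x-\alpha+1]|$ depends only on $x \bmod 1$, hence so do all the $A_\alpha^j(x)$ and $\beta_j(x)$ with $j\ge 0$, and therefore $B_\alpha(x+1)=B_\alpha(x)$. Thus $B_\alpha$ descends to a function $\overline{B}_\alpha$ on the compact space $\mathbb{T}=\R/\Z$, which is lower semicontinuous by Proposition~\ref{LSCa}. A lower semicontinuous function on a compact space attains its infimum, so there is $x_0$, which we may take in the fundamental domain $[\alpha-1,\alpha)$, with $B_\alpha(x_0)=\inf_{\R}B_\alpha=:m$. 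Here $m\ge 0$ because each term $\beta_{j-1}\log(1/A_\alpha^j(x))$ in \eqref{aBFun} is nonnegative, and $m<+\infty$ because $B_\alpha$ is finite at every Brjuno number (e.g.\ at $g$); in particular $x_0\neq 0$, since $B_\alpha(0)=+\infty>m$ by Lemma~\ref{P1}.

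Next I would dispose of the possibility that $x_0$ is negative. If $x_0\in(0,\alpha)$ there is nothing to do. Otherwise $x_0\in[\alpha-1,0)$, and since $\alpha\ge 1/2$ one has $|x_0|\le 1-\alpha\le\alpha$, so $x_0$ lies in the symmetric interval $[\alpha-1,1-\alpha]$. For $x$ in this interval the $\alpha$-expansions of $x$ and of $-x$ have $a_0=0$ and $x_0=|x|$ in both cases, and they agree in all digits $(a_j,\epsilon_j)$ with $j\ge 1$, differing only in the sign $\epsilon_0$ of the zeroth digit; since $\epsilon_0$ does not enter \eqref{aBFun}, the sequences $(A_\alpha^j)_{j\ge 0}$ and $(\beta_j)_{j\ge 0}$ coincide for $x$ and $-x$, whence $B_\alpha(-x)=B_\alpha(x)$ on $[\alpha-1,1-\alpha]$. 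Applying this to $x_0$ yields $-x_0\in(0,1-\alpha]\subseteq(0,\alpha]$ with $B_\alpha(-x_0)=m$, so $B_\alpha$ attains its global minimum at the point $-x_0\in[0,\alpha]$.

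The argument is essentially routine, and I do not expect any genuine obstacle. The only points needing a little care are the verification of the reflection symmetry on $[\alpha-1,1-\alpha]$ (which reduces to the elementary computation of $a_0$ and $\epsilon_0$ for small $|x|$ given above) and the bookkeeping that $[\alpha-1,1-\alpha]$ is contained in one period $[\alpha-1,\alpha)$, which is exactly where the hypothesis $\alpha\in[1/2,1]$ is used.
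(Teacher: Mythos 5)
Your argument is correct and is essentially the same as the paper's: lower semicontinuity plus $\Z$-periodicity gives a minimum on the circle, which is then moved into $[0,\alpha]$ by the reflection symmetry $B_\alpha(-x)=B_\alpha(x)$ on $[\alpha-1,1-\alpha]$. You simply spell out the symmetry verification and the exclusion of $x_0=0$ in more detail than the paper does.
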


\subsection{Remark on the lower semicontinuity of Brjuno function when $\alpha$ is irrational.}\label{lscfail}

 It is worth pointing out that the hypothesis $\alpha\in (0,1]\cap\mathbb{Q}$ in Proposition \ref{LSCa} is not a technical assumption. Indeed, there exist irrational values of $\alpha$ for which the Brjuno function $B_\alpha$ {\bf is not} lower semi-continuous, as shown by the the following example (see also Figure \ref{due}).
 \begin{exa}
 Suppose $\hat{\alpha}:=\frac{1}{1+\frac{1}{a+g}}$ where $a\geq2$ is a positive integer. Clearly $\hat{\alpha}\in[0,1]\setminus\Q.$ Then
 \begin{enumerate}
 \item[\textnormal{(i)}] $B_{\hat{\alpha}}(\hat{\alpha})=B_{\hat{\alpha}}(1-\hat{\alpha}) =\log(a+1+g)+\frac{1}{a+1+g}B_{\hat{\alpha}}(g).$
 \item[\textnormal{(ii)}] $\lim\inf_{x\to\ah^{+}} B_{\hat{\alpha}}(x)=B_{\hat{\alpha}}(\hat{\alpha}).$
 \item[\textnormal{(iii)}] $\lim\inf_{x\to\ah^{-}} B_{\ah}(x)<B_{\ah}(\ah).$
  \end{enumerate}
 \end{exa}

    \begin{figure}[h]
 \centering
 \includegraphics[width=0.5\textwidth]{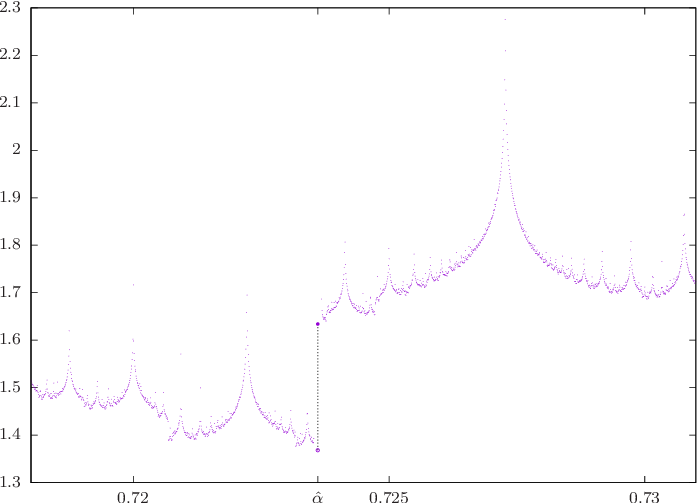}
 \caption{The graph showing lower semicontinuity fails for some irrational $\hat{\alpha}$; here a=2 and $\hat{\alpha}=\frac{2+g}{3+g}.$}
 \label{due}
 \end{figure}

\begin{proof}
\begin{enumerate}
 \item[(i)] Clearly, follows from the definition \eqref{BFE} and the fact that $A_{\ah}(1-\ah)=g.$
  \item[(ii)] 
  Suppose $x\geq \ah.$ Then we can write $x=\frac{1}{1+\frac{1}{a+g+\epsilon}}$ and $1-x=\frac{1}{a+1+g+\epsilon}$ where $\epsilon>0.$ Therefore
  \begin{align*}
  \liminf_{x\to\ah^{+}}B_{\ah}(x)&=\lim\inf_{\epsilon\to0^{+}}\left(\log(a+1+g+\epsilon)+\frac{1}{a+1+g+\epsilon}\log B_{\ah}(g+\epsilon)\right)\\
  &=\log(a+1+g)+\frac{1}{a+1+g}\lim\inf_{\epsilon\to0^{+}}\log B_{\ah}(g+\epsilon),
    \end{align*}
  and the required results follows since $\lim\inf_{\epsilon\to0^{+}}\log B_{\ah}(g+\epsilon)=B_{\alpha}(g).$

 \item[(iii)] 
  Suppose $x\leq \ah$  and write $x=\frac{1}{1+\frac{1}{a+g-\epsilon}}$ where $\epsilon>0.$ Therefore
  \begin{align*}
  \liminf_{x\to\ah^{-}}B_{\ah}(x)&=\lim\inf_{\epsilon\to0^{-}}\left(\log(a+1+g-\epsilon)-\log(a+g-\epsilon)+x B_{\ah}(\frac{1}{a+g-\epsilon})\right)\\
  &=\lim\inf_{\epsilon\to0^{-}}\left(\log(a+1+g-\epsilon)-\log(a+g-\epsilon)+x\left( \log(a+g-\epsilon)+\frac{1}{a+g-\epsilon}B_{\ah}(g)\right) \right)  \\
  &=B_{\ah}+\log(a+g)[-1+\ah]<B_{\ah}(\ah),
      \end{align*}
  the last inequality follows by the fact that $\log(a+g)[-1+\ah]<0.$
 
  \end{enumerate}\end{proof}

\subsection{Intermediate value property for $B_{\alpha}$ when $\alpha\in (0,1]\cap\mathbb{Q}.$}
Exploiting  the lower semicontinuity of $B_\alpha$ (which holds  when $\alpha\in \mathbb Q$), one can prove that it satisfies the intermediate value property, (even if it  is not a continuous function).
\begin{pro}\label{mean}[Intermediate Value Theorem for $B_{\alpha}$]
Let $\alpha\in\Q.$ If $B_{\alpha}(a)=\lambda,$ $B_{\alpha}(b)=\nu,$ then for all $\rho$ between $\lambda$ and $\nu$
there exists $\xi\in(a,b)$ such that $B_{\alpha}(\xi)=\rho.$
\end{pro}

The property is interesting in itself, but before giving a proof of the above proposition, let us state (and prove) the following somewhat surprising consequence:

\begin{cor}
Let $\alpha\in\Q\cap (0,1).$ Then the closure of the graph of $B_\alpha$ fills the region above it:
\begin{equation*}
 \overline{\left\{ (x,y)\in\R^{2}: y=B_{\alpha}(x) \right\}} = \left\{ (x,y)\in\R^{2}: y\geq B_{\alpha}(x) \right\}.
\end{equation*}
\end{cor}

\begin{proof}
Let $B_{\alpha}(\xi)<+\infty,$ $\rho> B(\xi).$ For any $n\in \mathbb{N}$ we can pick a rational value $r$
such that
$\xi<r<\xi+\frac{1}{n},$  $B_{\alpha}(r)=+\infty.$ This implies there exists $\xi_n \in (\xi,\xi+\frac{1}{n}),$ and
$B_{\alpha}(\xi_{n})=\rho,$ so that  
 $\left( \xi_{n}, B(\xi_{n})\right) \to (\xi,\rho)$ as $n\to\infty$.
\end{proof}

The fact that the intermediate value property holds is not too surprising, indeed it holds for the related Brjuno function $\tilde{B},$ defined by
$$\tilde{B}(x)=\sum^{\infty}_{k=0}\frac{\log a_{k+1}}{q_{k}},$$ 
where 
$x=[a_{0}; a_{1}, a_{2},\cdots,a_{n},a_{n+1},\cdots]$ is the regular continued fraction expansion of $x$ (i.e., the one associated to the Gauss map) and $p_n/q_n=[a_{0}; a_{1}, a_{2},\cdots,a_{n}]$ is the n-th convergent of $x$ 
(see \cite[Lemma 4-(i)]{MaCa08} which plays the role of Corollary \ref{level}).

Let us first provide some useful remarks and a lemma, which will be useful for the proof of Proposition \ref{mean} (which takes the last few lines at the end of this section).

\begin{rem}
\begin{enumerate}
\item[\textnormal{1.}]For all $\alpha\in (0,1]$ there is $H>0$ such that $|B_\alpha(x)-\tilde{B}(x)|\leq H$ for all $x$.
\item[\textnormal{2. }]If $x=[0; a_{1}, a_{2},\cdots,a_{n},a_{n+1},\cdots]$  then for all $N\in \mathbb{N}$ there is some $K=K(N)$ such that
$$A^N_{\alpha}(x)=[0; b,b', a_K, a_{K+1}, a_{K+2}, ...].$$
This means that the continued fraction expansion of $A^N_{\alpha}(x)$ coincides with a tail of the continued fraction of $x$ \textnormal{(}except possibly the first two partial quotients\textnormal{)}.
\end{enumerate}
\end{rem}

The proof of the first remark is easy, since it is well known that both $B_\alpha - B_1$ and $B_1-\tilde{B}$ are bounded functions. The proof of the second remark is easily proved by induction; indeed, the claim is clearly true for $N=0$, and if the claim holds for a certain integer $N$ we have 
 $$A^N_{\alpha}(x)=[0; b,b', a_K, a_{K+1}, a_{K+2}, ...], \ \ \ A_1(A^N_{\alpha}(x))=[0; b', a_K, a_{K+1}, a_{K+2}, ...]$$
Thus, if $A_1(A^N_{\alpha}(x)) \in [0, \bar{\alpha})$ then $A^{N+1}_{\alpha}(x))=A_1(A^N_{\alpha}(x))$, otherwise
$$A^{N+1}_{\alpha}(x))=1-A_1(A^N_{\alpha}(x))=
\left\{
\begin{array}{ll}
[0;1,b'-1,a_K, a_{K+1}, a_{K+2}, ...] & \mbox{if } b'>1 \\ 
{[0;1+a_K, a_{K+1}, a_{K+2}, ...]} & \mbox{if } b'=1.
\end{array}
\right.
$$
In any case , it is immediate to check that and the claim is true for $N+1$ as well.

\begin{lem}\label{lm1}
Let $B$ denote either $\tilde{B}$ or $B_{\alpha}$ for $\alpha \in (0,1]\cap \mathbb{Q}$.\\
Let $\xi=[a_0;a_1,a_2,...,a_{n-1},a_n, a_{n+1}, ...]$ be such that $B(\xi)<+\infty$ and let 
$\xi_n =[a_0;a_1,a_2,...,a_{n-1},1+a_n,1,1,1, ...]$. Then
\begin{enumerate} 
\item[\textnormal{(i)}] $\lim_{n\to +\infty} \xi_n=\xi$ and $\lim_{n\to +\infty} B(\xi_n)=B(\xi)$;
\item[\textnormal{(ii)}] for all $\epsilon>0$ there exists $\xi^{+}\in (\xi, \xi+\epsilon)$ and $\xi^{-}\in (\xi-\epsilon, \xi)$ such that 
\begin{equation*}
\left| B(\xi^{\pm}) -B(\xi)  \right|<\epsilon.
\end{equation*}
\end{enumerate}
\end{lem}

\begin{proof}
 We immediately check that $\xi_n\neq \xi$ for all $n$, $\xi_n  \to \xi$ as $n\to +\infty$ and ${\rm sign}(\xi_n-\xi)=(-1)^n$. It is therefore clear that to prove claim (ii) it is enough to prove claim (i).

We first prove claim (i) for $B=\tilde{B}$. In this case, it is immediate to check that $q_k(\xi)=q_k(\xi_n)$ for all $k \leq n-1$, hence the first $(n-1)$ terms of the sums cancel out and
$$ |B(\xi)-B(\xi_n)|\leq \left|\frac{\log(a_n)-\log(1+a_n)}{q_{n-1}}\right| + \sum_{k=n}^{+\infty}\frac{\log a_{k+1}}{q_k(\xi)} \leq \frac{\log 2}{q_{n-1}}+\sum_{k=n}^{+\infty}\frac{\log a_{k+1}}{q_k(\xi)}$$
Both items on the right in the above formula vanish when $n\to +\infty$, proving (i) for $\tilde{B}$.
 
To prove (i) for $B=B_\alpha$ we shall check that for all $\epsilon>0$ there exists some $\bar{n}$ such that $|B_\alpha(\xi_n)-B_\alpha(\xi)|<\epsilon$ for all $n\geq \bar{n}$.

Given any $\epsilon > 0$ let us first fix $N$ such that $\beta_N(x)<\frac{\epsilon}{4H} \ \forall x$ (this can be done by (iv) or (v) of Proposition \ref{prop}), and let us write

$$
\begin{array}{lll}
\left| B_{\alpha}(\xi) -B_{\alpha}(\xi_n)  \right|  & \leq & 
\left| B^{	N}_{\alpha}(\xi) -B^{N}_{\alpha}(\xi_n) \right| +
\left|   \beta_{N}(\xi) B_{\alpha} (A_{\alpha}^{N+1}(\xi)) -\beta_{N}(\xi_n)   B_{\alpha} (A_{\alpha}^{N+1}(\xi_n))     \right|\\
&\leq& \left| B^{	N}_{\alpha}(\xi) -B^{N}_{\alpha}(\xi_n) \right| +
|   \beta_{N}(\xi)-\beta_{N}(\xi_n)| | B_{\alpha} (A_{\alpha}^{N+1}(\xi))| + \\
&& + \beta_{N}(\xi_n)|B_{\alpha} (A_{\alpha}^{N+1}(\xi_n))-B_{\alpha} (A_{\alpha}^{N+1}(\xi))|
\end{array}
$$

Since $B_{\alpha}(\xi)<+\infty$ the point $\xi$ is irrational, and since $\alpha\in \mathbb{Q}$ there exists a neighbourhood $U$ of the point $\xi$ such that the
 truncated Brjuno function $B^N_\alpha$ (defined in \eqref{PS}), $\beta_n$, and $A_\alpha^{N+1}$ are all continuous on $U$, hence 
$$\begin{array}{r}
\left| B^{	N}_{\alpha}(\xi) -B^{N}_{\alpha}(\xi_n) \right|=o(1)\\
|   \beta_{N}(\xi)-\beta_{N}(\xi_n)| =o(1)
\end{array}  \ \ \mbox{for} \ n\to +\infty$$

On the other hand 
$$
\begin{array}{lll}
|B_{\alpha} (A_{\alpha}^{N+1}(\xi_n))-B_{\alpha} (A_{\alpha}^{N+1}(\xi))| &\leq & 
|B_{\alpha} (A_{\alpha}^{N+1}(\xi_n))- \tilde{B} (A_{\alpha}^{N+1}(\xi_n))| +
|\tilde{B} (A_{\alpha}^{N+1}(\xi_n))-\tilde{B} (A_{\alpha}^{N+1}(\xi))|+\\
&&+
 |\tilde{B} (A_{\alpha}^{N+1}(\xi))-  B_{\alpha} (A_{\alpha}^{N+1}(\xi))|\\
 &\leq & 2H +|\tilde{B} (A_{\alpha}^{N+1}(\xi_n))-\tilde{B} (A_{\alpha}^{N+1}(\xi))|.
\end{array}  
$$

Since we are assuming that $\xi_n \in U$ we will have that
$$A_{\alpha}^{N+1}(\xi)=[0;b,b', a_K, ..., a_{n-1}, a_n, a_{n+1}, ...], \ \ \ \
A_{\alpha}^{N+1}(\xi_n)=[0;b,b', a_K, ..., a_{n-1}, 1+a_n, 1,1,1, ...]
$$
and thus, using the fact that property (i) holds for $\tilde{B}$, we get $|\tilde{B} (A_{\alpha}^{N+1}(\xi_n))-\tilde{B} (A_{\alpha}^{N+1}(\xi))|= o(1)$ as $n\to +\infty$.

Summing up altogether we deduce that
$$ \left| B_{\alpha}(\xi) -B_{\alpha}(\xi_n)  \right|\leq o(1) +\frac{\epsilon }{4H} (2H +o(1)) \ \ \ \mbox{for } n\to +\infty$$
and since the right hand of the above formula tends to $\epsilon/2$ as $n\to +\infty$,  we get that there is $\bar{n}$ such that $| B_{\alpha}(\xi) -B_{\alpha}(\xi_n) |<\epsilon$ for all $n>\bar{n}$.
\end{proof}

\begin{cor}\label{level}
Let $\alpha \in (0,1]\cap \mathbb{Q}$, $\rho >0$ and let $V=(\xi_0,\xi_1)$ be a nonempty connected component of the open set $A_\rho:=\{ x : B_\alpha (x)<\rho\}$. Then $B_\alpha (\xi_0)=B_\alpha (\xi_1)=\rho$.
\end{cor}

\begin{proof}
The set $A_\rho$ is open because $B_\alpha $ is lower semicontinuous, thus since $V$ is a connected component of $A_\rho$ then $\partial V \cap A_\rho=\emptyset$, and this means that $B_\alpha (\xi_i) \leq \rho$ ($i\in \{0,1\}$). If by contradiction $B_\alpha (\xi_0) < \rho$, then by Lemma \ref{lm1} for all $\epsilon>0$ it would be possible to find $\xi^+\in (\xi_0, \xi_0+\epsilon)$ such that $B_\alpha (\xi^+)< B_\alpha (\xi_0)+\epsilon$, and if we choose $0<\epsilon<\rho- B_\alpha (\xi_0)$ we get that $\xi^+ \in (\xi_0,\xi_1)$ but $B_\alpha (\xi^+)<\rho$, a contradiction. Hence  $B_\alpha (\xi_0)=\rho.$ The same kind of argument proves that $B_\alpha (\xi_1)=\rho$ as well.
\end{proof}

Now we can easily prove Proposition \ref{mean}: indeed assume that $B_\alpha(a)=\lambda >\nu = B_\alpha(b)$ and let us fix any $\rho \in (\nu,\lambda)$. Then if $V=(\xi_0,\xi_1)$ is the connected component of $A_\rho$ containing $a$ we have that $a<\xi_1<b$ and $B(\xi_1)=\rho$ as claimed. The same argument works in case $\lambda<\nu$.

 \subsection{On the lower semicontinuity of other more general versions of Brjuno function.}    
 
 Unlike the results of Section \ref{Ca1} and the following sections, the argument to prove lower semicontinuity is quite general and with minor modifications it applies also to other variants of the Brjuno function.
 
Let $u:(0,1)\to\R^{
+}$ be a positive $C^{1}$ function such that $\lim_{x\to0^{+}}u(x)=\infty$ and $\nu\in\N$ be fixed. One can then define the following class of generalized Brjuno functions which includes those studied in \cite{LuMar_10},
\begin{equation}\label{GBF}
B_{\alpha,\nu,u}(x)=\sum^{\infty}_{j=0} \beta^{\nu}_{j-1}(x)u(x_{j}).
\end{equation}

Definition \eqref{GBF} is more general, as for different choices of $\nu$ and $u$ it implies various classical Brjuno functions. For example if we take $\nu=1,$ $u(x)=-\log (x)$ it implies classical $\alpha$-Brjuno function as defined in \eqref{aBFun}. For other choices of the singular behaviour of $u$ at zero, the condition $B_{\alpha,\nu,u}<\infty$ leads to different Diophantine conditions. For instance, if we choose  $u(x)=x^{-1/\sigma},$ where $\sigma>2,$ it is not difficult to check that if $B_{1,1,\sigma}(x)=\sum^{\infty}_{j=0} \beta_{j-1}(x)x_{j}^{-1/\sigma}<\infty,$ then $x$ is Diophantine number i.e., $x\in$ $CD(\sigma):=\{x\in\R\setminus\Q: q_{n+1}=O(q^{1+\sigma}_{n})\}.$ For more details regarding the function $B_{1,1,\sigma}$ we refer the reader to \cite{MaMoYo_06,LuMar_10}.

\begin{rem}\label{ch}
 Let $B_{\alpha,u,\nu}(x)$ be as defined in \eqref{GBF}. Then $B_{\alpha,u,\nu}(x)\to\infty$ as $x\to\frac{p}{q}.$
\end{rem}
 The proof is on the similar lines as of Lemma \ref{P1}. Indeed, we have $B_{\alpha,u,\nu}(x)=\sum^{\infty}_{j=0} \beta^{\nu}_{j-1}(x)u(x_{j})\ge \beta^{\nu}_{n-1}(x) u(x_{n})=\beta^{\nu}_{n-1} (S\cdot y)u( y).$  By using the value of $y$ from \eqref{y}, we know that $y\to 0$ as $x\to\frac{p}{q},$ and by the definition of $u,$ we have $u(y)\to\infty.$ Consequently $B_{\alpha,u,\nu}(x)\to\infty.$ Since $\nu$ is fixed positive integer, the term $\beta^{\nu}_{n-1}\geq \frac{1}{(\alpha+1)^{\nu}q^{\nu}_{n}}$ becomes smaller and smaller but remains non-zero.       

By Remark \ref{ch} and the fact that the partial sum $B^{K}_{\alpha,u,\nu}(x)$ is smooth on every $K$-cylinder, we have the following.
\begin{rem}
The function $B_{\alpha,u,\nu}$ is lower semicontinuous for all $\alpha\in \Q \cap[0,1].$
\end{rem}

\section{Scaling properties and local minima of $B_{1}$}\label{Ca1}
Throughout this section, we will focus on the classical case $\alpha=1.$ In view of the result  from \cite{BaMa_20} by Balazard-Martin, it is natural to ask about the minima on other intervals. For instance using, Theorem \ref{BM}, it is not difficult to prove the following:
 \begin{cor}\label{CBM}
 $\min_{x\in (0,1/2)}B_{1}(x)\geq B_{1}(1-g)=B_{1}(g^2).$
  \end{cor}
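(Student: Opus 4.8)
The plan is to derive Corollary \ref{CBM} directly from Theorem \ref{BM} together with the functional equation \eqref{BFE} and the $1$-periodicity of $B_1$. The starting observation is that on the interval $(0,1/2)$ the Gauss map $A_1(x)=\{1/x\}$ has first digit $a_1=\lfloor 1/x\rfloor\geq 2$, so one should split $(0,1/2)$ into the cylinders $I_{(a_1)}=\bigl(\tfrac{1}{a_1+1},\tfrac{1}{a_1}\bigr)$ for $a_1\geq 2$. The point $1-g=g^2=[0;2,1,1,1,\dots]$ lies in the cylinder $a_1=2$, and $A_1(1-g)=A_1(g^2)=\tfrac{1}{g^2}-2=\tfrac{1}{g^2}-2$. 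Since $1/g^2=G^2=G+1=g^{-1}+1$, we get $A_1(g^2)=g^{-1}-1=g$; thus $1-g$ is a preimage of $g$ under the Gauss map with digit $a_1=2$.

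First I would write, for $x\in I_{(a_1)}$ with $a_1\geq 2$, the functional equation in the form $B_1(x)=-\log x + x\, B_1(A_1(x))$. Using Theorem \ref{BM}, $B_1(A_1(x))\geq B_1(g)$, so $B_1(x)\geq -\log x + x\,B_1(g)$. The right-hand side is a function of $x$ alone; I would then minimize $\psi(x):=-\log x + x\,B_1(g)$ over $x\in(0,1/2]$. Its derivative is $-1/x + B_1(g)$, which vanishes at $x=1/B_1(g)$; since $B_1(g)$ is a small positive constant (numerically $B_1(g)\approx 1.0$, in any case $B_1(g)<2$), the critical point $1/B_1(g)>1/2$ lies outside $(0,1/2]$, so $\psi$ is decreasing on $(0,1/2]$ and attains its minimum over that interval at the right endpoint $x=1/2$. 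But $x=1/2$ is rational, so one actually works on the half-open cylinders; the key point is that $\psi$ is decreasing, hence on each cylinder $I_{(a_1)}$, $a_1\geq 3$, we have $\psi(x)>\psi(1/3)$, while on $I_{(2)}=(1/3,1/2)$ we have $\inf \psi = \psi(1/2^-)$. This shows $\inf_{x\in(0,1/2)}B_1(x)\geq \inf_{x\in(0,1/2)}\psi(x)$, and a little care is needed to see that this infimum is actually $\psi$ evaluated where the bound is attained.

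The cleaner route, which I would ultimately prefer, is to note that the lower bound $B_1(x)\geq\psi(x)$ is an \emph{equality} precisely when $A_1(x)=g$, i.e. when $x$ is a Gauss-preimage of $g$; and among such preimages in $(0,1/2)$, the one with $a_1=2$, namely $x=1-g$, gives the smallest value of $\psi$ (since $\psi$ is decreasing and $1-g=g^2$ is the largest preimage of $g$ below $1/2$, as preimages with larger $a_1$ are smaller). Therefore $B_1(1-g)=\psi(1-g)=-\log(1-g)+(1-g)B_1(g)$, and for every other $x\in(0,1/2)$ we get $B_1(x)\geq\psi(x)\geq\psi(1-g)=B_1(1-g)$, where the second inequality uses monotonicity of $\psi$ together with $x\le 1-g$ failing only on $I_{(2)}$ where one still checks $\psi(x)\ge\psi(1-g)$ fails — so in fact the honest statement is the displayed inequality $\min_{x\in(0,1/2)}B_1(x)\ge B_1(1-g)$, not equality, which is exactly what the corollary claims. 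The identity $B_1(1-g)=B_1(g^2)$ is just $1-g=g^2$.

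The main obstacle is the bookkeeping on the cylinder $I_{(2)}=(1/3,1/2)$: there $\psi$ is not bounded below by $\psi(1-g)$ pointwise (since $\psi$ is decreasing and part of $(1/3,1/2)$ lies to the right of $1-g\approx 0.382$). So one cannot conclude by a crude pointwise bound alone; instead one must use the finer information that on $I_{(2)}$, for $x>1-g$ one has $A_1(x)\ne g$ and in fact $A_1(x)$ ranges over $(0,1)$, so the second-order term $x(B_1(A_1(x))-B_1(g))$ re-enters — but this term is $\ge 0$, and combined with the observation that on the sub-interval $(1-g,1/2)$ one has $A_1(x)=1/x-2\in(0,g)$, on which $B_1$ is strictly larger than $B_1(g)$ by an amount one can quantify, the inequality is recovered. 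Making this last estimate airtight — i.e. showing the excess in the linear term $-\log x$ for $x>1-g$ is compensated by the excess in $x\,B_1(A_1(x))$ — is the one place that requires genuine (though elementary) work; everything else is the functional equation plus monotonicity of $\psi$.
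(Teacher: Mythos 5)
Your first step --- the one-step minorant $\psi(x)=-\log x+x\,B_{1}(g)$ obtained from \eqref{BFE} and Theorem \ref{BM}, together with the observation that $\psi$ is decreasing on $(0,1/2]$ because $1/B_{1}(g)>1/2$ --- is correct, and it does settle the corollary on $(0,g^{2}]$ (recall $g^{2}=1-g\approx 0.382$). But on $(g^{2},1/2)$ you have only \emph{diagnosed} the difficulty, not resolved it: there $\psi(x)<\psi(g^{2})=B_{1}(g^{2})$, and your proposal ends by asserting that the deficit is ``compensated by the excess in $x\,B_{1}(A_{1}(x))$'' in a way that ``requires genuine (though elementary) work''. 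That deferred estimate is the entire content of the proof, so as written your argument proves the corollary only on $(0,g^{2}]$. The compensation is not a soft consequence of $B_{1}\ge B_{1}(g)$: for $x$ slightly above $g^{2}$ the point $A_{1}(x)=1/x-2$ lies slightly below $g$, where $B_{1}-B_{1}(g)$ is small (of cusp order $|A_{1}(x)-g|^{1/2}$), while the deficit $\psi(g^{2})-\psi(x)$ is of first order in $x-g^{2}$; one therefore needs a quantitative lower bound on $B_{1}(A_{1}(x))-B_{1}(g)$ that works uniformly on all of $(g^{2},1/2)$, and ``an amount one can quantify'' is not a proof.

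For comparison, the paper handles this region by applying the functional equation a second time on the cylinder $(1/3,1/2)$, where $A_{1}(x)=1/x-2$, which yields the explicit minorant
$$\varphi_{1}(x)=-\log x+x\log\frac{x}{1-2x}+(1-2x)B_{1}(g),$$
again satisfying $\varphi_{1}(g^{2})=B_{1}(g^{2})=-3\log g$, and then verifying the resulting one-variable inequality on $(1/3,1/2)$. Your ``excess in $x\,B_{1}(A_{1}(x))$'' is exactly the extra term produced by this second iteration, so your plan is in the same spirit, but you never write it down or carry out the verification. Be warned that this step is genuinely delicate: one computes $\varphi_{1}'(g^{2})=G+(1+2G)\log g\approx -0.42<0$, so $\varphi_{1}$ dips slightly below $-3\log g$ just to the right of $g^{2}$ (for instance $\varphi_{1}(0.39)\approx 1.4421<1.4436\approx -3\log g$), and a complete argument must either iterate the functional equation further on the subcylinder of points with expansion beginning $[0;2,1,\dots]$ or invoke the cusp estimate of Theorem \ref{AIM} near $g^{2}$, whose square-root gain dominates the linear loss in $\psi$ there. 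In short, the one place you postpone as ``elementary work'' is precisely the place where the proof can fail, and it must be made explicit.
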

  
 The proof of Corollary \ref{CBM} is a routine computation, for completeness, we include its proof in Subsection \ref{PCBM}.
 
 Note that both $g$ and $g^{2}$ are ``noble'' numbers in the sense of definition \ref{noble}. Specifically a number $\nu$ is noble if and only if its regular continued fraction  expansion has the same tail as $g$:
 \begin{equation*}
 \nu=[0;a_{1},a_{2},\cdots,a_{n},\bar{1}].  
 \end{equation*}
 Equivalently, using the notation introduced by equation \eqref{stringaction} in Section 2, $\nu$ is noble if there exists a sequence $S=(a_1,...,a_n)$ such that $\nu=S \cdot g$.

\subsection{Proof of Theorem \ref{AIM}}

The proof of Theorem \ref{AIM} relies on some scaling properties of the function $B_1$ near its absolute minimum $g.$ Lemma \ref{L1} and Lemma \ref{L2} will give a precise description of this scaling property, which is actually quite evident from numerical data (see also Figure \ref{f:scaling}). 
 
Let $\Phi(x)=\frac{1}{1+x},$ and for all $n\geq 1,$ let us define the recursive relation: 
\begin{equation}\label{RRR}
x_{n+1}=\Phi(x_{n})   \quad \text{ where } x_{0}\in (0,1/2).
\end{equation}
Note that $B_{1}(g)=\frac{\log 1/g}{1-g}.$

In Lemma \ref{L1}, we prove the scaling property of $\mathcal E _{n} :=B_{1}(x_{n})-B_{1}(g),$ where the initial value $\mathcal E _{0}$ satisfies $\mathcal E _{0}\geq \left(3-\frac{1}{1-g}\right)\log \frac{1}{g}>0.$ 

\begin{lem}\label{L1}
\begin{align*}
\mathcal E _{2n+1}\geq \sigma \mathcal E _{1} g^{2n} \quad \text{ where }\quad \sigma:= \exp (-\sum^{\infty}_{n=1}\log (g^2 \frac{F_{2n+2}}{F_{2n}}))
\end{align*}
where $F_{n}$ are Fibonacci numbers with $F_{-1}=1$ and $F_{0}=0.$

\end{lem}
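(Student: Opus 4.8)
\textbf{Proof strategy for Lemma \ref{L1}.}

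The plan is to exploit the functional equation \eqref{BFE} to turn the quantity $\mathcal E_{n+1}=B_1(x_{n+1})-B_1(g)$ into a recursion relating it to $\mathcal E_n$, and then iterate this recursion along the even/odd subsequences. The key point is that $g$ is the fixed point of $\Phi$, so that applying $\Phi$ once corresponds to applying the inverse branch of the Gauss map $A_1$ which fixes $g$. Concretely, since $A_1(\Phi(x))=x$ for $x\in(0,1/2)$, the functional equation \eqref{BFE} at the point $x_{n+1}=\Phi(x_n)$ gives
\begin{equation*}
B_1(x_{n+1})=-\log x_{n+1} + x_{n+1} B_1(x_n) = \log(1+x_n) + \frac{1}{1+x_n} B_1(x_n),
\end{equation*}
and the same identity holds at $g$ with $x_n$ replaced by $g$ (using $\Phi(g)=g$). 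Subtracting, and writing $x_{n+1}=\frac{1}{1+x_n}$, I get
\begin{equation*}
\mathcal E_{n+1} = x_{n+1}\mathcal E_n + \big(\log(1+x_n)-\log(1+g)\big) + \big(x_{n+1}-g\big)B_1(g).
\end{equation*}
First I would check that the last two correction terms combine to something nonnegative: since $t\mapsto \log(1+t)$ is concave and $t\mapsto \frac{1}{1+t}$ is convex, and $B_1(g)=\frac{\log(1/g)}{1-g}$ is a specific constant, one verifies that $\big(\log(1+x_n)-\log(1+g)\big)+\big(x_{n+1}-g\big)B_1(g)\ge 0$ for $x_n$ in the relevant range (this is where the precise value of $B_1(g)$ enters and should be a short one-variable estimate). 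Granting this, we obtain the clean lower bound $\mathcal E_{n+1}\ge x_{n+1}\mathcal E_n$.

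Next I would iterate. Writing the product telescopically, $\mathcal E_{2n+1}\ge \big(\prod_{k=1}^{2n+1} x_k\big)\,\mathcal E_1/x_1 \cdot x_1$; more carefully, $\mathcal E_{2n+1}\ge \big(\prod_{k=2}^{2n+1}x_k\big)\mathcal E_1$. Now the orbit $x_k=\Phi^{k}(x_0)$ converges to $g$, and the convergents of $g=[0;\bar 1]$ are ratios of Fibonacci numbers, so that iterating $\Phi$ starting from a point produces partial products expressible via Fibonacci numbers: indeed $\Phi^k$ as a Möbius transformation has matrix $\begin{pmatrix} F_{k-1} & F_k \\ F_k & F_{k+1}\end{pmatrix}$ (up to indexing), and one computes that $\prod_{k=1}^{m} x_k$ behaves asymptotically like $g^{m}$ times a convergent factor $\frac{F_{m+1}}{F_1}$ or similar. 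The point is that $\prod_{k=2}^{2n+1} x_k = g^{2n}\cdot \prod_{k=1}^{n}\frac{g^{-2}x_{2k}x_{2k+1}\cdots}{\cdots}$; rearranging so that each factor is $g^2 \frac{F_{2k+2}}{F_{2k}}$, the infinite product $\prod_{k\ge1} (g^2 F_{2k+2}/F_{2k})$ converges (since $F_{2k+2}/F_{2k}\to g^{-2}$ geometrically fast), and its value is exactly $\sigma^{-1}$ as defined in the statement. Hence $\prod_{k=2}^{2n+1}x_k \ge \sigma g^{2n}$, giving $\mathcal E_{2n+1}\ge \sigma\,\mathcal E_1\,g^{2n}$.

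I expect the main obstacle to be two bookkeeping issues rather than a conceptual one: first, getting the Fibonacci indexing exactly right so that the telescoped product of the $x_k$ matches the product $\prod \log(g^2 F_{2n+2}/F_{2n})$ appearing in the definition of $\sigma$ (one must be careful about whether $x_0\in(0,1/2)$ shifts the indexing and about the distinction between even and odd truncations, which is why only the odd subsequence $2n+1$ appears); and second, verifying rigorously that the nonnegative-correction claim $\big(\log\frac{1+x_n}{1+g}\big)+(x_{n+1}-g)B_1(g)\ge 0$ holds throughout the orbit — this reduces to a convexity/concavity comparison pinned down by the exact constant $B_1(g)=\frac{\log(1/g)}{1-g}$, which I would check by noting that both sides vanish to first order at $x_n=g$ and comparing second-order behaviour, or by a direct monotonicity argument on $(0,1/2)$. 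The convergence of the infinite product defining $\sigma$ is immediate from $F_{2n+2}/F_{2n}=g^{-2}+O(g^{4n})$.
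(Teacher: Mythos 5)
Your overall architecture (functional equation at the fixed point $g=\Phi(g)$, a multiplicative recursion for $\mathcal E_n$, then a Fibonacci product converging to $\sigma$) is the right one and matches the paper, but the central step as you state it is false. You claim the one-step correction
$-\log\tfrac{x_{n+1}}{g}+(x_{n+1}-g)B_1(g)$ is nonnegative, so that $\mathcal E_{n+1}\ge x_{n+1}\mathcal E_n$ at every step. Write $h(u)=-\log(u/g)+(u-g)B_1(g)$; then $h(g)=0$ but $h'(g)=B_1(g)-\tfrac{1}{g}\approx 1.26-1.62<0$, so $h$ changes sign at $g$: it is positive for $u<g$ and \emph{negative} for $u>g$. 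Since the orbit $x_n=\Phi^n(x_0)$ with $x_0\in(0,1/2)$ alternates around $g$ (odd iterates lie above $g$, even iterates below), the correction is strictly negative at every odd step — e.g.\ for $x_0=1-g$ one gets $x_1\approx0.724$ and $h(x_1)\approx-0.025$. So your chain of inequalities breaks at each passage $\mathcal E_{2k}\to\mathcal E_{2k+1}$, and your suggested verification ("both sides vanish to first order at $x_n=g$") cannot work precisely because $h'(g)\neq0$.

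The paper's fix is to compose two steps before discarding the error. Using $-\log t\ge 1-t$ one gets $\mathcal E_{n+1}\ge x_{n+1}\mathcal E_n-l\,\delta_{n+1}$ with $l=\tfrac1g-B_1(g)>0$ and $\delta_n=x_n-g$; iterating once more, the combined error is $-l\,(x_{n+2}\delta_{n+1}+\delta_{n+2})$, and the identity $x_{n+2}(1+x_{n+1})=1$ gives $x_{n+2}\delta_{n+1}+\delta_{n+2}=g(g-x_{n+2})$, which is negative exactly when $x_{n+2}>g$, i.e.\ along the odd subsequence. Hence $\mathcal E_{2n+1}\ge x_{2n+1}x_{2n}\,\mathcal E_{2n-1}$, and only this two-step, odd-indexed recursion survives. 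From there your product bookkeeping is essentially correct: $\lambda_n=x_{2n+1}x_{2n}=\frac{F_{2n+1}+x_0F_{2n-1}}{F_{2n+2}+x_0F_{2n+1}}$ is squeezed between $F_{2n}/F_{2n+2}$ and $F_{2n+2}/F_{2n+4}$, $\lambda_n\to g^2$ exponentially, and $\prod_{k=1}^n\lambda_k\ge\sigma g^{2n}$ with $\sigma$ as in the statement. You should rebuild your argument around the two-step recursion; the single-step version cannot be repaired by convexity.
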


\begin{figure}[h]
 \centering
 \includegraphics[width=0.8\textwidth]{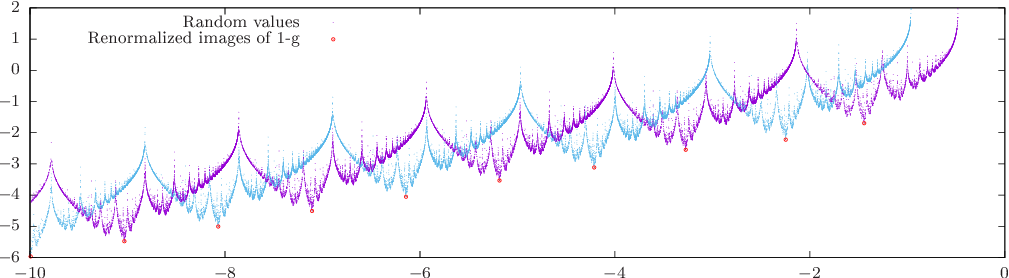}
 \caption{ Graph of $\left(\log|x-g|,\log (B_{1}(x)-B_{1}(g))\right)$ for various values of $x$ including $x_n=\Phi^{n}(1-g)$; the dots are represented in two different colours accordingly to the sign of $x-g$. }
 \label{f:scaling}
 \end{figure} 

\begin{proof}
 Since $x_{1}=\frac{1}{1+x_{0}},x_{2}=\frac{1+x_{0}}{2+x_{0}},\cdots,$ continuing in this way, \eqref{RRR} induces the recursive relation
\begin{equation}\label{F}
x_{n}=\frac{F_{n}+x_{0}F_{n-1}}{F_{n+1}+x_{0}F_{n}}, \quad \forall {n\geq 1} \text{ and } x_{0}\in(0,1/2).
\end{equation}
Using \eqref{BFE}, we write
\begin{align*}
B_{1}(\Phi(x_{n}))&=-\log(\Phi(x_{n}))+\Phi(x_{n})B_{1}(x_{n}) \text{ and}\\
B_{1}(\Phi(g))&=-\log(\Phi(g))+\Phi(g)B_{1}(g).
\end{align*}
Using \eqref{RRR} and the fact that $\Phi(g)=g,$ we have
\begin{align*}
B_{1}(x_{n+1})-B_{1}(g)=-\log\frac{x_{n+1}}{g}+x_{n+1}[B_{1}(x_{n})-B_{1}(g)]+B_{1}(g)[x_{n+1}-g].
\end{align*}
Setting $\mathcal E_{n}:=B_{1}(x_{n})-B_{1}(g)$ and $\delta_{n}:=x_{n}-g$ and observing that $-\log t\geq 1-t,$ we get 
\begin{equation*}
\mathcal E_{n+1}\geq x_{n+1}\mathcal E_{n}-l \delta_{n+1}
\end{equation*}
where $l:=\frac{1}{g}-B_{1}(g)$ and $l> 0.$
Hence
\begin{align*}
\mathcal E_{n+2}&\geq x_{n+2}\mathcal E_{n+1}-l \delta_{n+2} \geq x_{n+2}x_{n+1}\mathcal E_{n}-l (x_{n+2}\delta_{n+1}+\delta_{n+2}).
\end{align*}
Note that 
\begin{align*}
x_{n+2}\delta_{n+1}+\delta_{n+2}=x_{n+2}(1+x_{n+1})-g(1+x_{n+2})=1-g(1+x_{n+2}),
\end{align*}
which is positive if and only if $x_{n+2}<g$ i.e when $n$ is even.
Therefore for all $n\ge 1,$ 
\begin{align*}
\mathcal E_{2n+1}\geq \lambda_{n} \mathcal E_{2n-1},
\end{align*}
where 
$\lambda_{n}:=x_{2n+1}x_{2n}$ and by using \eqref{F}, we can write
\begin{equation*}
\lambda_{n}=\frac{F_{2n+1}+x_{0}F_{2n-1}}{F_{2n+2}+x_{0}F_{2n+1}}.
\end{equation*} Further,
for all $n\ge1,$ we have the following bounds for $\lambda_{n}$
\begin{equation*}
\frac{F_{2n}}{F_{2n+2}}\leq \lambda_{n} \leq \frac{F_{2n+2}}{F_{2n+4}},
\end{equation*}
i.e. $\lambda_{n}\geq 1/3$ and $\lambda_{n}\to g^2$ as $n\to\infty$ (exponentially).

In order to obtain the optimal estimate, first note that 
\begin{align*}
\mathcal E_{2n+1}&=\big( \prod^n_{k=1}\lambda_{k} \big) \mathcal E_{1} \text{ with} \\
\prod^n_{k=1}\lambda_{k} &=\exp (\sum^{n}_{k=1}\log \lambda_{k})=\exp (n\log g^{2}+\sum^{n}_{k=1}\log \frac{\lambda_{k}}{g^2})\\
&=g^{2n}\exp (\sum^{n}_{k=1}\log \frac{\lambda_{k}}{g^2}).
\end{align*}
The term $\sum^{\infty}_{k=1}\log \frac{\lambda_{k}}{g^2}$ in last equation is an absolutely convergent series because $\lambda_{k}$  converges to $g^{2}$ at an exponential rate.

Therefore, we can write
\begin{align}\label{E2}
\mathcal E _{2n+1}\geq \sigma \mathcal E _{1} g^{2n} \quad \text{ where }\quad \sigma:= \exp (-\sum^{\infty}_{n=1}\log (g^2 \frac{F_{2n+2}}{F_{2n}})).
\end{align}
 \end{proof}
The estimate \eqref{E2} implies that $B_{1}$ has a cusp like minima at $g,$ namely 
\begin{equation*}
B_{1}(x)-B_{1}(g)\geq c(x-g)^{1/2}        \quad \text{ with }
\end{equation*} 
(where the value of the exponent $1/2$ is obtained by the corresponding estimate and by comparing with the fact that 
$\delta_{n}\asymp g^{2n}$ for any $n\geq1$).

\begin{lem}\label{L2}
\begin{equation*}
B_{1}(x)-B_{1}(g) \geq \frac{\sigma \mathcal E_{1}}{2} {(x-g)^{1/2}} \text{ for } x>g.
\end{equation*}
\end{lem}
\begin{proof}
 Define $t_{n}:=\Phi^{2n-1}(\frac{1}{2})=\frac{F_{2n+1}}{F_{2n+2}}$ such that $t_{0}:=1,$ $t_{1}:=\frac{2}{3},$ $t_{2}:=\frac{5}{8} \cdots$
and $I_{n}=\Phi^{2n-1}((0,\frac{1}{2}))=(t_{n},t_n-1).$
Observe that
\begin{equation*}
t_{n}-g=\frac{F_{2n+1}}{F_{2n+2}}-g=g^{4n+3}\frac{1+g^{2}}{1-g^{4n+4}}
\end{equation*}
and 
\begin{equation*}
1\leq \frac{1+g^2}{1-g^{4n+4}}\leq G.
\end{equation*}

On the other hand, by \eqref{E2} we get 
$x\in I_{n}\implies B_{1}(x)-B_{1}(g)\geq \sigma\mathcal E _{1}g^{2n}.$ Therefore, if 
$t_{n}\leq x \leq t_{n-1}$ then

\begin{align*}
x-g\leq t_{n-1} -g \leq g^{4n-1}G=G^{2}g^{4n} \text{ and }
\end{align*}
\begin{align*}
B_{1}(x)-B_{1}(g)\geq \sigma \mathcal E_{1} g^{2n}=\frac{\sigma\mathcal E_{1}}{G}Gg^{2n}.
\end{align*}
Hence for all $x\in I_{n},$ and for all $ n\geq 1$ 
\begin{equation*}B_{1}(x)-B_{1}(g)\geq \frac{\sigma\mathcal E_{1}}{G}({x-g})^{1/2}.
\end{equation*}
Since $\bigcup_{n\geq 1}I_{n}=(g,1),$ we obtain the required result, with a constant which is slightly better.
\end{proof} 
 
 To complete the proof of Theorem \ref{AIM} we will also include the case when $1/2<x<g.$
 \begin{lem}
 \begin{equation*}
B_{1}(x)-B_{1}(g)\geq \frac{\sigma\mathcal E_{1}}{2}\sqrt{g-x} \text{ for } 1/2<x<g.
\end{equation*}
 \end{lem}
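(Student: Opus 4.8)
The plan is to deduce this left–hand estimate from the right–hand estimate of Lemma~\ref{L2} by transporting it through the map $\Phi(x)=\frac{1}{1+x}$. The key observation is that $\Phi$ is precisely the branch of the inverse Gauss map with $A_1(\Phi(x))=x$, it fixes $g$, and it maps $(1/2,g)$ into $(g,2/3)\subset (g,1)$ (since $1+x\in(3/2,1+g)$ and $1+g=G=1/g$). So for $x\in(1/2,g)$ I would set $y:=\Phi(x)\in(g,1)$ and feed $y$ into Lemma~\ref{L2} — more precisely into the slightly sharper bound $B_{1}(y)-B_{1}(g)\ge \frac{\sigma\mathcal E_{1}}{G}(y-g)^{1/2}$ that is actually established inside its proof.

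The steps would be as follows. First, apply the functional equation \eqref{BFE} at $y$: since $A_1(y)=1/y-1=x$ it reads $B_1(y)=-\log y+yB_1(x)$, and subtracting the same relation at the fixed point, $B_1(g)=-\log g+gB_1(g)$, gives
\begin{equation*}
y\,[B_1(x)-B_1(g)]=[B_1(y)-B_1(g)]+\log\tfrac{y}{g}-(y-g)B_1(g).
\end{equation*}
Second, show the two correction terms are non-negative: $-\log t\ge 1-t$ with $t=g/y$ yields $\log\frac{y}{g}\ge \frac{y-g}{y}$, hence $\log\frac{y}{g}-(y-g)B_1(g)\ge (y-g)\big(\frac1y-B_1(g)\big)=(y-g)\big(1+x-B_1(g)\big)\ge 0$, using $x>1/2$ together with the elementary bound $B_1(g)=\frac{\log(1/g)}{1-g}<\frac32$ (one has $\log G<\frac12$ and $1-g=g^2>\frac13$). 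Therefore $B_1(x)-B_1(g)\ge \frac1y[B_1(y)-B_1(g)]\ge \frac{1+x}{G}\,\sigma\mathcal E_1\,(y-g)^{1/2}$. Third, use $1-g=g^2$ to rewrite $y-g=\frac{1}{1+x}-g=\frac{g(g-x)}{1+x}$; substituting this and $1/G=g$ collapses the bound to
\begin{equation*}
B_1(x)-B_1(g)\ge \sigma\mathcal E_1\,\sqrt{g^{3}(1+x)}\,\sqrt{g-x},
\end{equation*}
and for $x>1/2$ one has $g^{3}(1+x)>\frac32 g^{3}=\frac32(2g-1)=3g-\frac32\ge\frac14$ (which is just $6\sqrt5\ge 13$), giving the claimed $\frac{\sigma\mathcal E_1}{2}\sqrt{g-x}$.

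I do not expect a genuine conceptual obstacle here; the one delicate point is bookkeeping of the constant. The substitution $x\mapsto y=\Phi(x)$ contracts $g-x$ by the factor $\frac{g}{1+x}\le \frac{2g}{3}$, so if one plugged in only the \emph{stated} constant $\frac{\sigma\mathcal E_1}{2}$ of Lemma~\ref{L2}, the resulting constant would be $\frac12\sqrt{g(1+x)}$, which for $x$ near $1/2$ is slightly below $\frac12$; using the proof's constant $\frac{\sigma\mathcal E_1}{G}$ instead repairs this, and the remaining numerical checks ($B_1(g)<\frac32$ and $3g-\frac32\ge\frac14$) are routine. One should also record that $\Phi$ keeps $y$ in $(g,1)$ so that Lemma~\ref{L2} applies, and that this (together with the sign of the correction term) is exactly where the hypothesis $x>1/2$ is used.
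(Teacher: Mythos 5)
Your argument is correct and follows the same strategy as the paper's: transfer the right-hand estimate across $g$ by a single application of the functional equation \eqref{BFE}, use the sharper internal constant $\sigma\mathcal E_{1}/G$ established inside the proof of Lemma \ref{L2} (you are right that the stated constant $\sigma\mathcal E_{1}/2$ would fall just short near $x=1/2$), and discard a correction term after checking its sign. The only difference is the direction of the transfer: the paper writes $x=\Phi(t)$ with $t=A_1(x)\in(g,1)$ and reads \eqref{BFE} at $x$ as $B_1(x)=-\log x+xB_1(t)$, controlling its correction term via $\Phi(t)<g$ and $B_1(g)<1/g$ and relating $t-g$ to $g-x$ by the mean value theorem, whereas you pass to the preimage $y=\Phi(x)\in(g,2/3)$, solve $B_1(y)=-\log y+yB_1(x)$ for $B_1(x)$, control the correction via $1+x>\tfrac32>B_1(g)$, and use the exact identity $y-g=g(g-x)/(1+x)$. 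Both routes land on the constant $\sigma\mathcal E_{1}/2$, and your numerical verifications ($\log G<\tfrac12$, $g^2>\tfrac13$, $\tfrac32 g^{3}\ge\tfrac14$) are all correct.
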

 \begin{proof}
 Let $1/2<x<g$ and write $x=\Phi(t)$ with $t\in(g,1).$ Then by repeating the same argument used in the proof of last lemma  we obtain
\begin{align*}
B_{1}(\Phi(t))-B_{1}(\Phi(g))&\geq (\Phi(t)-\Phi(g))\left(B_{1}(g)-\frac{1}{g}\right)+\Phi(t)(B_{1}(t)-B_{1}(g))\\
&\geq \Phi(t)\frac{\sigma\mathcal E_{1}}{G} \sqrt{t-g}.
\end{align*}
Observe that 
\begin{align*}
\Phi(g)-\Phi(t)&=\Phi'(\xi)(g-t) \text{ for some } \xi\in(g,t)\\
&=\frac{1}{(1+\xi)^2} (t-g)     \text{ since } \Phi'(\xi)=-\frac{1}{(1+\xi)^2}.
\end{align*}
Hence $\sqrt{t-g}=(1+\xi)\sqrt{g-x}$ for some $\xi\in(g,t).$
Thus, we have the inequality
\begin{align*}
B_{1}(x)-B_{1}(g)\geq \frac{1+\xi}{1+t} \frac{\sigma \mathcal E_{1}}{G}\sqrt{g-x}
\end{align*}
with $t\in(g,1)$ and $\xi\in(g,t),$ so that 
\begin{equation*}
\frac{1+\xi}{1+t}\geq \frac{1+g}{1+t}\geq\frac{1+g}{2}=\frac{G}{2}.
\end{equation*}
This implies
\begin{equation*}
B_{1}(x)-B_{1}(g)\geq \frac{\sigma\mathcal E_{1}}{2}\sqrt{g-x} \text{ for } 1/2<x<g, 
\end{equation*}
hence proving the claim. 
 \end{proof}

The theorem we just proved is very technical, but has a quite interesting consequence: the property of being a local minimum propagates from $g$ to all other noble numbers:
 \begin{cor}
 Let $\nu$ be a noble number.  Then $\nu$ is a local minimum of $B_{1}.$
 \end{cor}
 \begin{proof}
 Let $\nu= S\cdot g$ with $|S|=K$, and let us use the map $x \mapsto S \cdot x$ to parametrize a neighbourhood of $\nu.$ We have then
 \begin{align*}
 B_{1}(S\cdot x)-B_{1}(S\cdot g)&=B^{(K)}_{1} (S\cdot x)-B^{(K)}_{1} (S\cdot g)+(S\cdot x) B_{1}(x)+(S\cdot g) B_{1}(g)  \\
 &\geq  B^{(K)}_{1} (S\cdot x)-B^{(K)}_{1} (S\cdot g)+(S\cdot x -S\cdot g) B_{1}(g) +
 c(S\cdot x)|x-g|^{1/2}.
  \end{align*}
  Let us denote the right hand side of last inequality by $\phi(x).$ It is easy to see that there exists an open neighbourhood $U$ of $g$  such that $\phi$ is continuous on $U$ and  differentiable on $U\setminus\{g\},$ and $\lim_{x \to g^\pm} \phi'(x)=\pm \infty$. Therefore, $\phi$ has a minimum at $g,$ which implies $B_1$ has a local minimum at $\nu=S\cdot g.$
  \end{proof}
 
 \subsection{Proof of Corollary \ref{CBM}} \label{PCBM}
\begin{proof}

Since by \eqref{gen} we have $B_{1}(x)=\sum_{j=0}^K \beta_{j-1}\log(1/A_1^j(x))+\beta_{K}(x)B_{1}(A_{1}^{K+1}(x))$   we get
\begin{equation}\label{DK}
    B_{1}(x)\geq \varphi_{K}(x) \quad \text{ for all } K\in \N, \ \ \ \mbox{ where } \varphi_{K}(x):= \sum_{j=0}^K \beta_{j-1}\log(1/A_1^j(x))+\beta_{K}(x)B_{1}(g).
\end{equation}

Note that $\varphi_{K}(g^2)=B(g^2)$; moreover $\varphi_{K}$ is smooth and convex on the $K$-th cylinder containing $g^2$ (recall that convexity follows from equation \eqref{generalterm}  ).
We now examine what equation \eqref{DK} gives for\footnote{Note that we omit the case $K=2$ since it turns out to be irrelevant for our purpose.} $K=0,1,3$; on the cylinder containing $g^2$ the building blocks of $\varphi_K$ will be $\omega_j(x):=\beta_{j-1}(x)\log(1/A_1^j(x))$ with $0\leq j \leq 3$ where 
\begin{eqnarray*}
    \omega_0(x)= -\log x \ \ x\in(0,1) & \ & \omega_1(x)= x [\log x-\log(1-2x)] \ \ x\in (\frac{1}{3},\frac{1}{2}) \\
  \omega_2(x)= \left(2  x - 1\right) \left[\log\left(3  x - 1\right) - \log\left(1-2  x \right)\right]  \ \ x\in (\frac{1}{3},\frac{2}{5})
& \ & \omega_3(x)= \left(3 x - 1\right) \left[\log\left(3x - 1\right) - \log\left(2-5 x \right)\right]  x\in (\frac{3}{8},\frac{2}{5})
\end{eqnarray*}

\begin{figure}[h]
    \centering
    \includegraphics[width=0.8\textwidth]{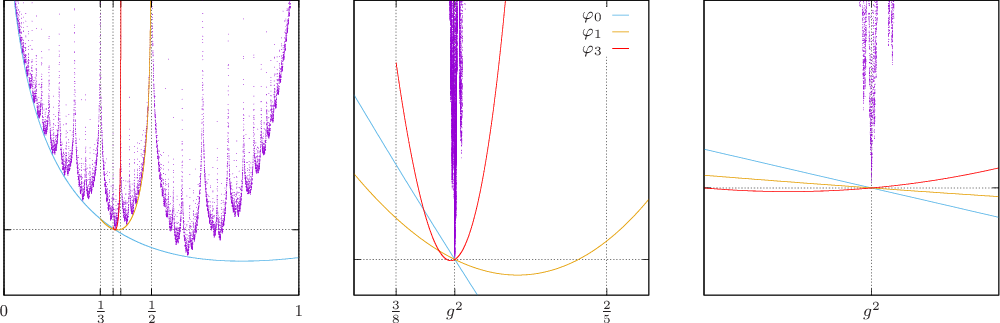}
    \caption{The three curves that bound $B_1$ from below, seen at three different level of zoom around the point $(g^2,B_1(g^2))$.}
    \label{fig:lowerbound@g2}
\end{figure} 

When $K=0$ (see Figure \ref{fig:lowerbound@g2} Left, azure line)  we have $
B_{1}(x)\geq \varphi_{0}(x)$ for $ x\in[0,1]$,

and   $\varphi_{0}(g^2)= B_{1}(g^2)$
    with $\varphi'_{0}(g^2)<0.$ This implies
\begin{equation*}
\varphi_{0}(x)\geq B_{1}(g^2) \text{ for all } x\in[0,g^2].
\end{equation*}
When $K=1$ (see Figure \ref{fig:lowerbound@g2} Center, orange line)  we have $B_{1}(x)\geq \varphi_{1}(x)$ where $x\in[1/3,1/2],$
and  $\varphi_{1}(2/5)>B_{1}(g^2),$ 
    with $\varphi'_{1}(\frac{2}{5})>0.$ Therefore, we have
\begin{equation*}
\varphi_{1}(x)\geq B_{1}(g^2) \text{ for all } x\in[2/5,1/2].
\end{equation*}

When $K=3$ (see Figure \ref{fig:lowerbound@g2} Right, red line)  we have $B_{1}(x)\geq \varphi_{3}(x)$ where $x\in[3/8,2/5]$ and $\varphi_{3}(g^2)= B_{1}(g^2)$
    with $\varphi'_{3}(g^2)>0.$
    This implies
\begin{equation*}
\varphi_{3}(x)\geq B_{1}(g^2) \text{ for all } x\in[g^2,2/5].
\end{equation*}

Since $[0,1/2]=[0,g^2]\cup [g^2,2/5]\cup[2/5,1/2].$ We get $B_{1}(x)\geq B_{1}(g^2)$ for all $x\in[0,1/2]$.

  \end{proof}
  
\section{Minima of $B_{\alpha}$ for $\alpha<1$}

In this section, we explore to what extent the result of \cite{BaMa_20} can be exported to $B_\alpha$ for $\alpha<1$. One remarkable feature of this problem is that the minimum of $B_\alpha$ is constant when the parameter ranges on some  parameter intervals, but sometimes it jumps abruptly. The first occurrence of this phenomenon can be observed in a left neighbourhood of $\alpha=g$, but it seems to be a pattern which occurs repeteadly in parameter space.

\subsection{Minima of $B_{\alpha}$ when $g<\alpha<1.$}

Here, we prove Theorem \ref{Bingo}:  for any real number $\alpha\in(g,1),$  the minimum of $B_\alpha$ is attained at $g.$ It all boils down to the following lemma:
\begin{lem}
For all $\alpha\in(g,1),$
\begin{equation}\label{Bing1}
B_{\alpha}(x)\geq B_{1}(x).
\end{equation}
\end{lem}
Indeed, if $\alpha\in(g,1)$ then $B_{\alpha}(g)=B_{1}(g).$ Using the fact that $g$ is the minimum of $B_{1},$  inequality \eqref{Bing1} implies
\begin{equation*}
B_{\alpha}(x)\geq B_{1}(x)\geq B_{1}(g) \text{ for all } x.
\end{equation*}

Therefore, to prove Theorem \ref{Bingo} we only need to prove inequality \eqref{Bing1}.

\begin{proof}

Recall that $\beta_{k}(x)=|xq_{k}-p_{k}|$ where $\frac{p_{k}}{q_{k}}$ is the $k^{th}$ $\alpha$-convergent of $x.$ By using \cite[Lemma 1.8.]{MaMoYo_97}, we can write
$$p_{k}=P_{n(k)}   \quad q_{k}=Q_{n(k)},$$ 
where $\frac{P_{n}}{Q_{n}}$ is the $n^{th}$ convergent of the regular continued fraction of $x.$ Moreover one has that $n(k)-n(k-1)\in \{ 1,2\}$ i.e. either $n(k-1)=n(k)-1$ or $n(k-1)=n(k)-2.$ Therefore setting $\tilde\beta_{n}(x)=|Q_{n}x-P_{n}|$ we get $\beta_{k}=\tilde\beta_{n(k)}.$

\begin{align}
B_{\alpha}(x)&= \sum^{\infty}_{k=0} \beta_{k-1}(x)\log(1/A_{\alpha}^{k}(x))\notag\\
&= \sum^{\infty}_{k=0} \beta_{k-1}(x)[\log\beta_{k-1}(x)-\log\beta_{k}(x)]\notag\\
&= \sum^{\infty}_{k=0} \tilde\beta_{n(k-1)}(x)[\log\tilde\beta_{n(k-1)}(x)-\log\tilde\beta_{n(k)}(x)].\label{Bing3}
\end{align}

Now 
\begin{align*} \tilde\beta_{n(k-1)}(x)[\log\tilde\beta_{n(k-1)}(x)-\log\tilde\beta_{n(k)}(x)]=
\begin{cases}\tilde\beta_{n(k)-1}(x)[\log\tilde\beta_{n(k)-1}(x)-\log\tilde\beta_{n(k)}(x)]
&\text{ if } n(k-1)=n(k)-1\\
\tilde\beta_{n(k)-2}(x)[\log\tilde\beta_{n(k)-2}(x)-\log\tilde\beta_{n(k)}(x)] &\text{ if } n(k-1)=n(k)-2.
\end{cases}
\end{align*}

Further,
\begin{multline*}
  \tilde\beta_{n(k)-2}(x)[\log\tilde\beta_{n(k)-2}(x)-\log\tilde\beta_{n(k)}(x)]
 \tilde\beta_{n(k)-2}(x)[\log\tilde\beta_{n(k)-2}(x)-\log\tilde\beta_{n(k)-1}+ \log\tilde\beta_{n(k)-1}-\log\tilde\beta_{n(k)}(x)]\\
\geq  \tilde\beta_{n(k)-2}(x)[\log\tilde\beta_{n(k)-2}(x)-\log\tilde\beta_{n(k)-1}]+\tilde\beta_{n(k)-1}(x)[\log\tilde\beta_{n(k)-1}-\log\tilde\beta_{n(k)}(x)],
\end{multline*}
 the second last inequality follows by the fact that $\tilde\beta_{n-2}\geq\tilde \beta_{n-1}.$

Therefore, \eqref{Bing3} implies 

\begin{align*}
B_{\alpha}(x)&= \sum^{\infty}_{k=0} \tilde\beta_{n(k-1)}(x)[\log\tilde\beta_{n(k-1)}(x)-\log\tilde\beta_{n(k)}(x)]\\
&\geq    
 \sum^{\infty}_{k=1}\sum_{j=n(k-1)}^{n(k)-1}
\tilde\beta_{j-1}(x)[\log\tilde\beta_{j-1}(x)-\log\tilde\beta_{j}(x)]\\
&=\sum_{j=0}^{\infty}\tilde\beta_{j-1}(x)[\log\tilde\beta_{j-1}(x)-\log\tilde\beta_{j}(x)]=B_{1} (x).
\end{align*}

\end{proof}

As a consequence of Theorem \ref{Bingo} we get the following
\begin{cor}\label{immediate}
Let $g<\alpha<1.$ If $\nu=S\cdot g$ is a noble number such that 
\begin{equation*}
A^{k}_{\alpha}(\nu)\neq \alpha \text{ for all } k \leq |S|. 
\end{equation*}
Then $\nu$ is a local cusp-like minimum for $B_\alpha$. For example $\nu=g^{2}$ is a local minimum for $B_\alpha$ for all $g<\alpha<1.$
\end{cor}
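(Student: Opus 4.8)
The plan is to deduce the corollary from Theorem \ref{Bingo} together with the cusp estimate of Theorem \ref{AIM}, in three stages: first I would record what the hypothesis $A_\alpha^k(\nu)\neq\alpha$ buys, then promote the cusp minimum from $B_1$ at $g$ to $B_\alpha$ at $g$, and finally propagate it along the block $S$ to $\nu=S\cdot g$ exactly as in the $\alpha=1$ case (the corollary following Theorem \ref{AIM}).

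\textbf{Stage 0 (role of the hypothesis).} Write $S=(a_1,\dots,a_n)$, so $|S|=n$ and $\nu=S\cdot g$. I would first check that for $x$ in a neighbourhood of $g$ one has $A_\alpha(x)=1/x-1=A_1(x)$: since $1/g=G$ and $g<\alpha<1$ a direct computation gives $[1/x-\alpha+1]=1$ for $x$ near $g$, so the two maps coincide and $g$ is their common fixed point. The hypothesis $A_\alpha^k(\nu)\neq\alpha$ for $k\leq|S|$ guarantees that $\nu$ lies in the interior of the $\alpha$-cylinder $I_S$ and that the $A_\alpha$-orbit of $\nu$ agrees with its regular (Gauss) orbit for the first $n$ steps, terminating at $A_\alpha^{n}(\nu)=g$. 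Consequently $\varphi_S:x\mapsto S\cdot x$ is a smooth diffeomorphism from a neighbourhood of $g$ onto a \emph{two-sided} neighbourhood of $\nu$, and there $A_\alpha^{K+1}(S\cdot x)=x$ with $K=|S|-1$.

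\textbf{Stage 1 (cusp for $B_\alpha$ at $g$).} Next I would establish the analogue of Theorem \ref{AIM} for $B_\alpha$, namely $B_\alpha(x)-B_\alpha(g)\geq c\,|x-g|^{1/2}$ for $x$ near $g$. Since $A_\alpha\equiv A_1$ on a neighbourhood $W$ of $g$, on which $\Phi=A_\alpha^{-1}$ is an orientation-reversing contraction fixing $g$, and since $B_\alpha(g)=B_1(g)$ by Theorem \ref{Bingo}, I would rerun the scaling argument of Lemmas \ref{L1}--\ref{L2} with $B_1$ replaced by $B_\alpha$. The one adjustment is to seed the construction by a small interval $J\subset W\cap(0,\alpha)$ adjacent to $g$ rather than by $(0,1/2)$: its $\Phi^2$-iterates tile a one-sided neighbourhood of $g$ while keeping the whole recursion $x_{n+1}=\Phi(x_n)$ inside $W\cap(0,\alpha)$, where the functional equation $B_\alpha(\Phi(t))=-\log\Phi(t)+\Phi(t)B_\alpha(t)$ is valid. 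The only structural inputs used are this functional equation, the identity $\Phi(g)=g$, the positivity of $l:=1/g-B_\alpha(g)=1/g-B_1(g)>0$, and $\mathcal E_n=B_\alpha(x_n)-B_\alpha(g)\geq0$ (global minimality, Theorem \ref{Bingo}); all four persist, so the geometric estimate $\mathcal E_{2n+1}\geq\sigma\mathcal E_1 g^{2n}$ again yields the $1/2$-cusp on both sides of $g$.

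\textbf{Stage 2 (propagation).} Finally I would transfer the cusp from $g$ to $\nu$. Using \eqref{gen} and Stage 0 to write $B_\alpha(S\cdot x)=B_\alpha^{(K)}(S\cdot x)+\beta_K(S\cdot x)B_\alpha(x)$ with $A_\alpha^{K+1}(S\cdot x)=x$, one obtains
\begin{align*}
B_\alpha(S\cdot x)-B_\alpha(\nu)&=\big[B_\alpha^{(K)}(S\cdot x)-B_\alpha^{(K)}(\nu)\big]+\big[\beta_K(S\cdot x)-\beta_K(\nu)\big]B_\alpha(g)\\
&\quad+\beta_K(S\cdot x)\big[B_\alpha(x)-B_\alpha(g)\big]\\
&\geq\phi(x):=\big[B_\alpha^{(K)}(S\cdot x)-B_\alpha^{(K)}(\nu)\big]+\big[\beta_K(S\cdot x)-\beta_K(\nu)\big]B_\alpha(g)+c\,\beta_K(S\cdot x)\,|x-g|^{1/2},
\end{align*}
where the last line uses Stage 1 and $\beta_K(S\cdot x)>0$. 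On $I_S$ the maps $B_\alpha^{(K)}$ and $\beta_K$ are smooth, so the first two terms of $\phi$ are $C^1$ in $x$ and vanish at $x=g$, while the final term contributes a genuine cusp with $\phi'(x)\to\pm\infty$ as $x\to g^\pm$. Hence $\phi$ has a strict local minimum at $g$, and since $B_\alpha(S\cdot x)-B_\alpha(\nu)\geq\phi(x)$ with equality at $x=g$, $B_\alpha$ has a cusp-like local minimum at $\nu$. For $\nu=g^2=S\cdot g$ with $S=(2)$ one computes $A_\alpha(g^2)=g$, and $g^2,g\neq\alpha$ since $\alpha>g>g^2$, so the hypothesis holds for every $g<\alpha<1$.

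The main obstacle is Stage 1: obtaining the cusp for $B_\alpha$ (rather than $B_1$) at $g$. Its content is the observation that $A_\alpha\equiv A_1$ on a neighbourhood of $g$, which (with $B_\alpha(g)=B_1(g)$) lets the scaling machinery be localized and reused; one must verify that the seeded recursion $x_n=\Phi^n(x_0)$ stays inside $W\cap(0,\alpha)$ so that the functional equation applies at every step. A secondary point requiring care is confirming in Stage 0 that $A_\alpha^k(\nu)\neq\alpha$ genuinely places $\nu$ in the interior of $I_S$ with $\alpha$-orbit equal to the Gauss orbit, so that $\varphi_S$ parametrizes a two-sided neighbourhood and $A_\alpha^{K+1}(S\cdot x)=x$ holds on both sides of $g$.
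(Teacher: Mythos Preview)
Your proposal is correct, and Stages~0 and~2 are exactly the propagation argument the paper has in mind (mirroring the corollary after Theorem~\ref{AIM}). The difference lies in Stage~1, where you take a more laborious route than necessary.

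You propose to re-run the scaling machinery of Lemmas~\ref{L1}--\ref{L2} for $B_\alpha$, exploiting that $A_\alpha\equiv A_1$ near $g$ and carefully localizing the recursion. That does work, but the paper's intended shortcut is much shorter: the lemma underlying Theorem~\ref{Bingo} gives $B_\alpha(x)\geq B_1(x)$ for \emph{all} $x$, and since $B_\alpha(g)=B_1(g)$ one gets immediately
\[
B_\alpha(x)-B_\alpha(g)\;\geq\;B_1(x)-B_1(g)\;\geq\;c\,|x-g|^{1/2}
\]
by Theorem~\ref{AIM}. So the cusp estimate for $B_\alpha$ at $g$ is a one-line consequence of results already in hand, and what you flagged as ``the main obstacle'' is in fact trivial. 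This is why the paper can say the corollary is ``an immediate consequence of Theorem~\ref{Bingo} and the fact that $A_\alpha^k$ is smooth at $\nu$'': Theorem~\ref{Bingo} (via its underlying inequality) supplies the cusp at $g$ for free, and the smoothness hypothesis is all that is needed to carry out the propagation of Stage~2. Your approach has the merit of being self-contained (it would survive even without the global comparison $B_\alpha\geq B_1$), but here that comparison is available and makes the argument essentially immediate.
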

The proof of Corollary \ref{immediate} is an immediate consequence of Theorem \ref{Bingo} and  
the fact that $A^{k}_{\alpha}$ is smooth at $\nu$ for all $k\leq |S|$. 

\subsection{Local minima for $B_{1/2}.$} \label{Ca2}
 In order to prove analogous results for other values of $\alpha,$ it is reasonable to start from $\alpha=1/2.$ In this case the absolute minimum will occur at $\gamma=\sqrt{2}-1$ where $B_{1/2}$ has a cusp. To prove it, we can adapt the  same techniques used in the proofs of Theorem \ref{BM} and Theorem \ref{AIM}.

 Throughout this subsection, we will assume $\alpha=1/2$, and thus we will focus on the Brjuno function $B_{1/2}$ associated with the nearest integer continued fractions. 
 
We will show that the minimum of $B_{1/2}$ is attained at the silver number $\gamma,$ see Figure \ref{2025}.  Using the symmetry and the translation  invariance  of $B_{1/2}$ it is enough to prove the following:
 \begin{thm}\label{minima}
 $\min_{x\in (0,1/2)}B_{1/2}(x)=B_{1/2}(\gamma)$ where $\gamma=\sqrt 2 -1$.\end{thm}

 \begin{figure}[h]
 \centering
  \includegraphics[width=0.8\textwidth]{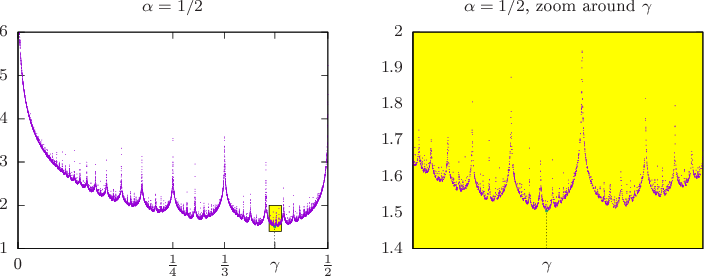}
 \caption{The graphs of $B_{1/2}$ with a zoom around the global minimum at $\gamma=\sqrt{2}-1.$ }\label{2025}
 \end{figure}  

The strategy of the  proof of Theorem \ref{minima} closely follows \cite{BaMa_20} and is based on four intermediate steps. 

Define $C:=\inf_{x\in[0,1/2]} B_{1/2}(x),$ the first step is a consequence of  Proposition \ref{LSCa} for $\alpha=1/2.$
\begin{step}
    $C=B_{1/2}(r)$ for some $r\in(0,1/2).$
\end{step}

\begin{step}\label{P3}
    Let $r\in(0,1/2).$ Then for all $K\in \N,$ we have 
\begin{equation*}
C=B_{1/2}(r)\geq \frac{B^{(K)}_{1/2}(r)}{1-\beta_{K}(r)}.
\end{equation*}
\end{step}

\begin{proof}
By using \eqref{gen} for $\alpha=1/2$ we have
\begin{align*}
C=B_{1/2}(r)=B^{(K)}_{1/2}(r)+\beta_{K}(r)B_{1/2}(A_{1/2}^{K+1}(r)) &\geq  B_{1/2}^{(K)}(r)+C\beta_{K}(r)=\frac{B^{(K)}_{1/2}(r)}{1-\beta_{K}(r)}.
\end{align*}
\end{proof}

\begin{step}\label{P4}
For all $K\in \N,$ we have 
\begin{equation*}
B_{1/2}(\gamma)= \frac{B^{K}_{1/2}(\gamma)}{1-\beta^{K}_{1/2}(\gamma)}.
\end{equation*}
\end{step}
\begin{proof}
Using \eqref{gen} now for $r=\gamma$ we obtain 
\begin{align*}
B_{1/2}(\gamma)=B^{K}_{1/2}(\gamma)+\beta_{K}(\gamma)B_{1/2}(A_{1/2}^{K+1}(\gamma)),
\end{align*}
since $A_{1/2}^{K+1}(\gamma)=A_{1/2}(\gamma)=\gamma$ for any $K\in\N$ therefore the required result follows.
\end{proof}

\begin{step}\label{P5}
Let $r\in(0,1/2)$ such that $C=B_{1/2}(r).$ Then $r\geq \gamma.$
\end{step}
\begin{proof}
From Step \ref{P3} and \ref{P4} with $K=0$ and by the definition of $C,$ we have
\begin{equation*}
\frac{B^{(0)}_{1/2}(\gamma)}{1-\beta_{0}(\gamma)}=B_{1/2}(\gamma)\geq C= B_{1/2}(r)\geq \frac{B^{(0)}_{1/2}(r)}{1-\beta_{0}(r)}.
\end{equation*}

Thus for $0<x< 1/2,$ 
\begin{align*}
\frac{B^{(0)}_{1/2}(x)}{1-\beta_{0}(x)}=\frac{\ln\frac{1}{x}}{1-x}.
\end{align*}
Let $h(x)=\frac{\ln\frac{1}{x}}{1-x}.$ Then 
$$h'(x)=\frac{-x^{-1}(1-x)+\ln (1/x)}{(1-x)^2}.$$ 
Since $h'(x)<0$ on $(0,1/2)$, 
the function $h$ is strictly decreasing on $(0,1/2),$ hence $\gamma\leq r.$

\end{proof}

\begin{pro}
Let $r\in(0,1/2)$ such that $C=B_{1/2}(r).$ Then $r=\gamma.$
\end{pro}
\begin{proof}
We need to show $\gamma>r.$ From Step \ref{P3} and Step \ref{P4} with $K=1$ and by the definition of $C,$ we have
\begin{align*}
\frac{B^{(1)}_{1/2}(\gamma)}{1-\beta_{1}(\gamma)}=B_{1/2}(\gamma)\geq C=B_{1/2}(r)\geq \frac{B^{(1)}_{1/2}(r)}{1-\beta_{1}(r).}
\end{align*}
Note that for  $2/5 <x<1/2,$ $A_{1/2}=\frac{1}{x}-2.$
Therefore, let 
\begin{align*}
f(x)&=\frac{\ln(1/x)}{1-xA_{1/2}}+\frac{x\ln(\frac{1}{A_{1/2}(x)})}{1-xA_{1/2}(x)}\\
&=\frac{\ln(1/x)}{2x}+\frac{1}{2}\ln\frac{x}{1-2x}.
\end{align*}
Then 
\begin{align*}
f'(x)=\frac{(1-2x)\ln x+3x-1}{2x^2(1-2x)}.
\end{align*}
Since $(1-2x)>0$ for $x\leq 1/2$, the sign of $f'(x)$ depends only on $g(x):=(1-2x)\ln x+3x-1.$
It is easy to see that $g$ is strictly increasing on the interval $(0,1/2]$ and consequently $f$ is increasing on $(2/5,1/2]$ (because $g'(x)$ and $f'(x)$ are positive on (2/5,1/2)). Thus $\gamma\geq r$ and combining it with the estimate of Step \ref{P5}, we finally get $\gamma=r.$
\end{proof}

\subsubsection{Scaling properties of $B_{1/2}.$}

\begin{thm}\label{SLB1/2}

Let $\gamma=\sqrt{2}-1.$ There exists a constant $c>0$ such that
\begin{equation*}B_{1/2}(x)-B_{1/2}(\gamma)\geq c|x-\gamma|^{1/2},\end{equation*}
for all $x\in (0,\frac{1}{2}).$

\end{thm}
The proof of Theorem \ref{SLB1/2} follows almost exactly on the same line of investigations as for the case $\alpha=1,$ with some additional arguments specific to the settings of $\alpha=1/2,$ which we will outline for completeness.
\begin{proof}
Let $\Psi(x)=\frac{1}{2+x}$ and define the recursive relation 
\begin{equation*}
x_{n+1}=\Psi(x_{n})   \quad \text{ where } x_{0}\in (0,2/5).
\end{equation*}
 
From \eqref{BFE} we have 
\begin{equation*}
B_{1/2}(\Psi(x_{n}))=-\log(\Psi(x_{n}))+\Psi(x_{n})B(x_{n})
\end{equation*}
and 
\begin{equation*}
B_{1/2}(\Psi(\gamma))=-\log(\Psi(\gamma))+\Psi(\gamma)B(\gamma).
\end{equation*}
Note that $\Psi(\gamma)=\gamma.$

Now 
\begin{align*}
B_{1/2}(x_{n+1})-B_{1/2}(\gamma)=-\log\frac{x_{n+1}}{\gamma}+x_{n+1}[B_{1/2}(x_{n})-B_{1/2}(\gamma)]+B_{1/2}(\gamma)[x_{n+1}-\gamma].
\end{align*}

Setting $\mathcal E_{n}:=B_{1/2}(x_{n})-B_{1/2}(\gamma)$ and $\delta_{n}:=x_{n}-\gamma$ and observing that $-\log t\geq 1-t$ we get 
\begin{equation*}
\mathcal E_{n+1}\geq x_{n+1}\mathcal E_{n}-l \delta_{n+1},
\end{equation*}
where $l:=\frac{1}{\gamma}-B_{1/2}(\gamma)$ and $l> 0.$
Hence
\begin{align*}
\mathcal E_{n+2}\geq x_{n+2}x_{n+1}\mathcal E_{n}-l (x_{n+2}\delta_{n+1}+\delta_{n+2}).
\end{align*}
Note that 
\begin{align*}
x_{n+2}\delta_{n+1}+\delta_{n+2}&=x_{n+2}(1+x_{n+1})-\gamma(1+x_{n+2}),\\
&= x_{n+2}(2+x_{n+1})-\gamma(2+x_{n+2})+\gamma-x_{n+2}=1-\gamma(2+x_{n+2})+\gamma-x_{n+2}.
\end{align*}
Both these terms $1-\gamma(2+x_{n+2}),$ $\gamma-x_{n+2}$  are negative if and only if $x_{n+2}>\gamma$ i.e. if $n$ is odd.

Therefore for all $n\ge 1,$ 
\begin{align*}
\mathcal E_{2n+1}\geq x_{2n+1}x_{2n} \mathcal E_{2n-1}.
\end{align*}
Repeating the similar arguments as used for $B_{1},$ we have 
\begin{equation} \label{EstB}
\mathcal E_{2n+1}\geq c_{3} \gamma^{2n} \text{ for some constant } c_{3} .
\end{equation}
This estimate shows that $B_{1/2}$ has a cusp like minimum at $\gamma,$ namely 
\begin{equation*}
B_{1/2}(x)-B_{1/2}(\gamma)\geq c|x-\gamma|^{\tau}        \quad \text{ with }
\end{equation*}
 $\tau=\frac{1}{2}$
 where the value of $\tau$ is obtained by using the fact that 
$\delta_{2n+1}\asymp \gamma^{4n}$ for any $n\geq1$ and comparing it with the estimates \eqref{EstB}.
\end{proof}

\subsection{Minima of $B_\alpha$ for $\alpha$ in a neighbourhood of $1/2$}

 In this section we will prove Theorem \ref{monotonic}; the core of the proof is contained in the following lemma. 
\begin{lem}\label{l:matching}
    Let $\phi(t):= \frac{1+t}{2+t}$ and let $\alpha, \alpha' \in (\gamma, g),$ such that one of the following is true
    $$(R) \ \ \frac{1}{2} \leq \alpha'<\alpha\leq \phi(\alpha')\leq g, \ \ \ \ \ \ 
    (L) \ \ \gamma < \alpha<\alpha'\leq\frac{1}{2}.$$
     Let $x\in \R$, let $x_0\in [0,\overline{\alpha}]$ be the representative of $x$ in $[0,\overline{\alpha}]$, and let  $x_k=A_\alpha^k(x_0)$ denote the orbit of $x_0$. Then we have the following relation:
    \begin{equation}\label{differenceB}
B_\alpha(x)- B_{\alpha'}(x)=\sum_{k} \beta_{k-1}h_I(x_k), 
\end{equation}
with $\beta_{k-1}=x_0x_1...x_{k-1}$, $h_I(y)=h(y)\chi_I(y)$ where $\chi_I$ is the charateristic function of the interval 
$$I=\begin{cases}
    [\alpha',\alpha) & \mbox{in case } (R)\\
(1-\alpha', 1-\alpha] & \mbox{ in case } (L)
\end{cases}$$ and
$$ h(y)=-\log y + \log(1-y) +y \log y- (1-y)\log(1-y)-(2y-1)\log(2y-1).\ 
$$

\end{lem}

\begin{proof}

Let $\alpha, \alpha'$ be such that (R) holds and let  $x$ be a real number. For now, suppose that $x\in [0, \alpha')$. In order to compare $B_\alpha(x)$ and $B_{\alpha'}(x),$ we need to analyze how the two orbits $x_k:=A_\alpha^k(x)$ and  $x'_k:=A_{\alpha'}^k(x)$ behave. For $k=0,$ the two orbits coincide and they will continue together until the first index $k_0$ for which $x_k$ enters the interval $[\alpha',\alpha)$, in that case $x_{k_0}'=1-x_{k_0} \in (1-\alpha, 1-\alpha'] \subset (\frac{1}{\alpha+2}, \frac{1}{2})$ (to establish the last inclusion we use the hypothesis $\alpha\leq \phi(\alpha')$). At the following step, we will have
$$x_{k_0+1}=2-\frac{1}{x_{k_0}}=\frac{2 x_{k_0}-1}{x_{k_0}}, \ \ \ \ \ \ x'_{k_0+1}=\frac{1}{x'_{k_0}}-2=\frac{1}{1-x_{k_0}}-2=\frac{2x_{k_0}-1}{1-x_{k_0}}.$$
Let us note that 
$$\frac{1}{x_{k_0+1}}-\frac{1}{x'_{k_0+1}}=1,$$
and this implies that at the next step,  either both $x'_{k_0+2}$ and $x_{k_0+2}$ belong to $[0,\alpha')$ and $x'_{k_0+2}=x_{k_0+2},$ or $x_{k_0+2}\in [\alpha',\alpha)$ and  $x'_{k_0+2}=1-x_{k_0+2}$. Therefore, the pair of points $(x_k,x_k')$ at each step is in one of the following three states:
$$(A) \ x'_k=x_k,\ \ \ \ (B)\  x'_k=1-x_k, \ \ \ \ (C) \ \frac{1}{x_{k}}-\frac{1}{x'_{k}}=1,$$
and all the possible transitions are given by the graph in Figure \ref{ABC}:

\begin{figure}[h]
 \centering
 \includegraphics[width=0.3\textwidth]{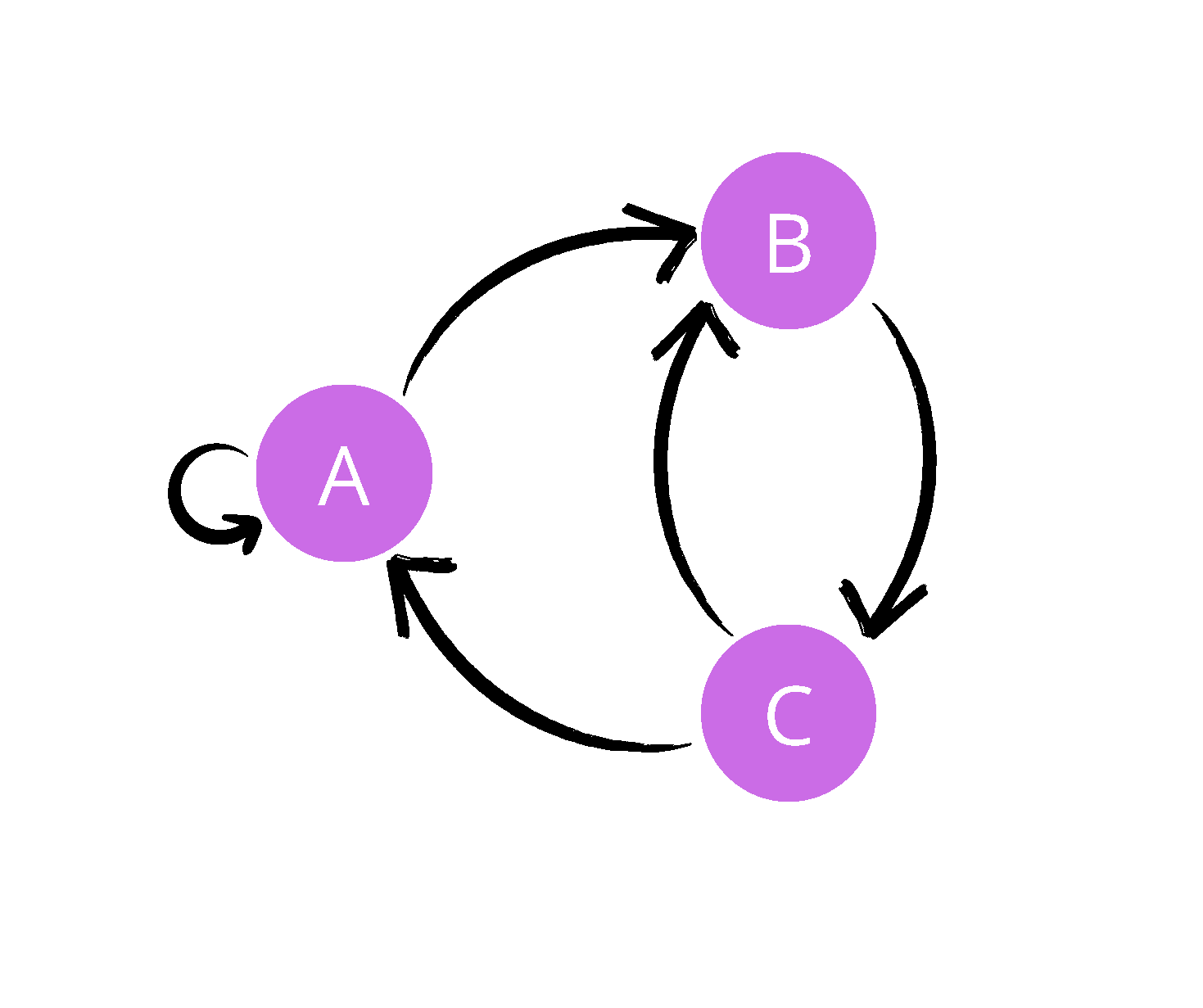}
 \caption{Transition diagram}  \label{ABC} \end{figure} 

 More generally, if the value $x \in \R$ (and not necessarily in $[0, \alpha')$ as above), one can find two values $x_0\in [0, \alpha]$ and $ x'_0 \in [0, \alpha']$ 
such that $$B_\alpha(x)=B_\alpha(x_0), \ \ \ B_{\alpha'}(x)=B_{\alpha'}(x'_0), \ \ \ \ \mbox{and} \ \ x'_0=x_0\in [0,\alpha') \ \mbox{ or } x_0 \in [\alpha',\alpha),  x'_0=1-x_0.$$
This means that the above analysis still holds for general $x\in \R,$ the only difference being that the starting state might be (B) rather than  (A).
In order to compare the values of two different $\alpha$-Brjuno functions at the same point, let us call $k_0,$ the minimum index such that the pair $(x_{k_0},x'_{k_0})$ is in state (B).  Using the functional equation, we then obtain:
\begin{equation}\label{syncro}
    \begin{array}{ll}
  B_\alpha(x)= B_\alpha(x_{0})= &B_\alpha^{k_0-1}(x_{0})+\beta_{k_0-1}B_\alpha(x_{k_0})  \\
    B_{\alpha'}(x)= B_{\alpha'}(x'_{0})= &B_{\alpha'}^{k_0-1}(x'_{0})+\beta'_{k_0-1}B_{\alpha'}(x'_{k_0}).
\end{array}
\end{equation} 

Note that if $k_0=0,$ both  equations above are trivial identities. Otherwise, by the definition of $k_0,$ it follows  that $B_\alpha^{k_0-1}(x_{0})=B_{\alpha'}^{k_0-1}(x'_{0})$ and $\beta_{k_0-1}=x_0...x_{k_0-1}=x'_0...x'_{k_0-1}=\beta'_{k_0-1}$.
On the other hand, setting $y:=x_{k_0} \in [\alpha',\alpha)$ (so that $x'_{k_0}=1-y$) we can use the functional equation a couple  more times to obtain 
$$ \begin{array}{rcl}
  B_\alpha(x_{k_0})= & \log \frac{1}{y}+yB_\alpha(2-\frac{1}{y}) =
  & \log \frac{1}{y}+y[ \log\frac{y}{2y-1}+\frac{2y-1}{y}B_\alpha(x_{k_0+2})]  \\
    B_{\alpha'}(x'_{k_0})= B_{\alpha'}(1-y)= & \log \frac{1}{1-y}+(1-y)B_{\alpha'}(\frac{1}{1-y}-2)=& \log \frac{1}{1-y}+(1-y)[\log \frac{1-y}{2y-1}+ \frac{2y-1}{1-y}B_{\alpha'}(x'_{k_0+2}) ].
\end{array}
$$
So, subtracting the equations \eqref{syncro}, and using these two formulas, we get
$$ B_\alpha(x)- B_{\alpha'}(x)=\beta_{k_0-1} h(x_{k_0})+\beta[B_\alpha(x_{k_0+2})-B_{\alpha'}(x'_{k_0+2}) ]. $$
where $\beta=\beta_{k_0+1}=\beta'_{k_0+1}$ and
\begin{align*}h(y)&=\log \frac{1}{y}+y \log\frac{y}{2y-1}-\log \frac{1}{1-y}-(1-y)\log \frac{1-y}{2y-1}\\
&=-\log y + \log(1-y) +y \log y- (1-y)\log(1-y)-(2y-1)\log(2y-1).
\end{align*}
Note that the pair $(x_{k_0+2},x'_{k_0+2})$ is either in state $(A)$ or in state $(B,)$ so we can iterate the same argument and get 
\begin{equation*}
B_\alpha(x)- B_{\alpha'}(x)=\sum_{k\in J_B} \beta_{k-1}h(x_k) \ \ \ \ \mbox{ where } 
J_B=\{k \in \N \ : \ (x_k,x'_k) \ \mbox{ is in state $(B)$}\}.
\end{equation*}
This is the same as formula \eqref{differenceB} since
$$k \in J_B \iff x_k\in [\alpha',\alpha),$$
and the claim for case $(R)$ is proved.

The very same argument works in case $(L),$ namely for $\gamma <\alpha < \alpha'\leq 1/2 $.

 \end{proof}
\begin{figure}[h]
 \centering
\includegraphics[width=0.8\textwidth]{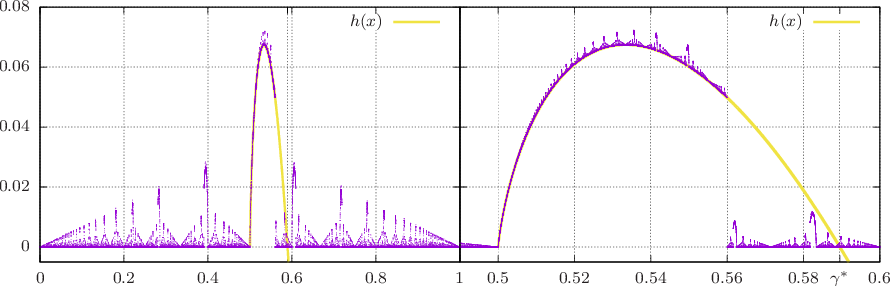}
 \caption{The graph of $B_{\alpha}-B_{\alpha'}$ for $\alpha'=0.5$ and $\alpha=0.56$ with a zoom around $(\alpha',\alpha)=[0.5, 0.56].$ }\label{nimpa}
 \end{figure}  
 \begin{rem}
  Even if we never mentioned it, the proof of Lemma  \ref{l:matching} relies on the property of matching, which is a peculiar feature of $\alpha$-continued fractions. We will give a quick overview of this property in the last subsection, but for a complete account we refer to \cite{CT} or also \cite{BaCaLe_24} for a viewpoint closer to our focus.    
 \end{rem}

Note that the function $h$ is continuous  on $[1/2,g]$, hence it is bounded. Moreover  $h$ is strictly positive for $x\in ( 1/2,\gamma^*),$ where $\gamma^*\approx 0.5895...\in (1-\gamma, 3/5)$. Therefore, if $1/2\leq \alpha' \leq\alpha<\gamma^*$, then $B_\alpha(x)\geq B_{\alpha'}(x).$ Since by ergodicity of $A_\alpha,$ the orbit of almost every $x\in [0,\alpha)$ visits the open interval $(\alpha',\alpha),$ we get that this difference is actually strictly positive for almost every $x$. 
The same argument works also in the $(L)$ case, giving that for all $\alpha, \alpha'$ such that
$\gamma < \alpha<\alpha'\leq\frac{1}{2},$ the inequality 
$B_\alpha(x) > B_{\alpha'}(x)$ holds for almost every $x,$ see Figure \ref{nimpa} .

The above considerations together with Theorem \ref{monotonic} and
Theorem \ref{minima}  immediately imply the following: 
\begin{cor}
    The minimum point and value do not change as $\alpha$ ranges over $(\gamma, \gamma^*]$ i.e.,

    $$\min_{x}B_{\alpha}(x)=B_{\alpha}(\gamma) \ \ \ \ \forall \ \alpha\in (\gamma, \gamma^*). $$
\end{cor}

Note that our results leave a gap in parameter space: in fact on the interval $(\gamma^*, g),$ we only have some numerical evidence that suggests that both the minimum point and value do not stay constant.

\subsection{Numerical evidence and conjectures.}
There are quite a few issues which remain open: questions for which we do not yet have a rigorous answer, but for which we can formulate conjectures based on solid numerical evidence. We list some of them below.

\subsubsection{Is the minimum of  $B_\alpha$ always attained?}
We believe the answer is negative. In fact, numerical evidence    suggests that $\inf_x B_g(x)=\sigma_0:=-3\log g,$ and this value is not a minimum, (see Figure \ref{fig:9}, Left).
Indeed, letting $\phi(x)=\frac{1+x}{2+x},$ it is easy to check that $\phi^n(g^2)\nearrow g$ and $B_g(\phi^n(g^2))\to \sigma_0$ as $n\to\infty;$ thus $\sigma_0 \geq \inf_x B_g(x)$. On the other hand, the numerical evidence\footnote{This fact could also be proven analytically.} shows that $\inf_x B_g(x)=\inf_{x\in (1/2,g)} B_g(x).$ However, it is also true that $B_g(x) > B_g(\phi(x))$ for all $ x \in (1/2,g)$ (see Figure  \ref{fig:9}, Right). This last property is an obstruction to the existence of  a minimum in  $(1/2,g)$ and is quite delicate to prove.

\begin{figure}[h]
    \centering
    \includegraphics[width=0.8\textwidth]{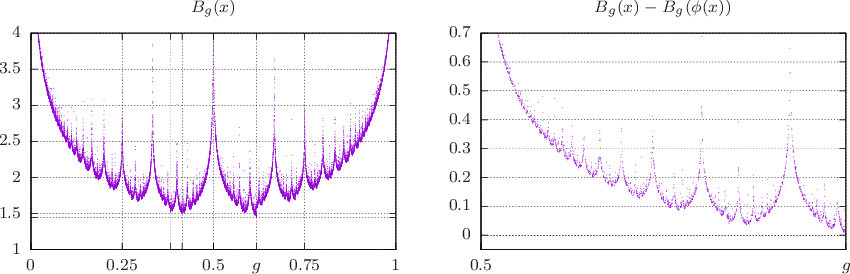}
    \caption{Left: the graph of $B_g$. Right: graph of $B_g(x)-B_g(\phi(x))$ on $(1/2,g)$.}
    \label{fig:9}
\end{figure}

However, we believe that if $\alpha \in (\gamma, g)$ the minimum is attained.

\subsubsection{What is the regularity of the function $\alpha \mapsto \inf_x B_\alpha(x)$ on $[1/2,1]$?}
From the numerical evidence, the function $\alpha \mapsto \inf_x B_\alpha(x)$ looks continuous\footnote{The function looks like a devil's staircase.} on $(\gamma,g),$ (even if the minimum point might jump see figure \ref{maximazoom}). However, this function seems to have a big jump at $\alpha=g$, and also at $\alpha=\gamma$. This latter discontinuity seems to be due to the fact that, in a neighbourhood of $\gamma,$ $\min B_\alpha(x)= B_\alpha(\gamma)$, and yet the value $B_\alpha(\gamma)$ changes abruptly when $\alpha$ goes below $\gamma$ because in this case $\gamma$ becomes preperiodic point (i.e. it is not a fixed point any more).
\begin{figure}[h]
  \centering
   \includegraphics[width=0.8\textwidth]{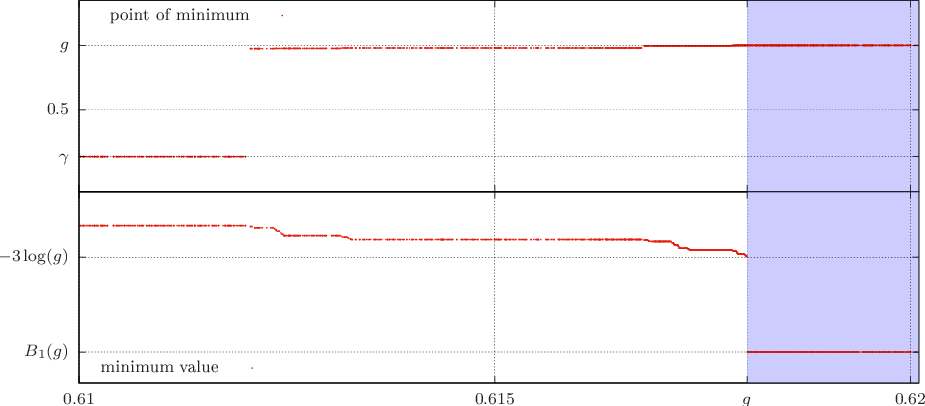}
  \caption{The graphs of the minimum point and the minimum value, zoomed on a range where they are not constant.} 
\label{maximazoom}  
   \end{figure} 

\subsubsection{What can we expect from $B_\alpha$ for  $\alpha< \gamma?$}
To answer this question, we have to say something more about the matching property within the family of $\alpha$-continued fractions $(A_\alpha)$. It is  known (see \cite{CT}) that parameter space splits into two sets: a ``stable set'' and a bifurcation set $\mathcal{E},$ where $\mathcal {E}$ is  defined below (see \eqref{EE}). 

The bifurcation set is a measure zero closed set which admits the following simple characterization in terms of the Gauss map: 
\begin{equation}\label{EE}
    \mathcal{E}=\{\alpha \in [0,1] \ : A_1^k(\alpha) \geq \alpha \ \ \forall \ k \geq 0\}. 
\end{equation}
The largest element of $\mathcal{E}$ is $g$, the second largest is $\gamma.$ However, $\mathcal{E}$ is not made up of isolated points. Indeed by the above  characterization, it is immediate to see that every irrational $\alpha=[0;a_1,a_2,a_3,...],$ with $a_1>a_j \ \forall j >1$ belongs to $\mathcal{E}$.

The complement of $\mathcal{E}$ is a countable union of its connected components which are open intervals. These open intervals are referred to as ``matching'' or ``synchronization''intervals,  for a reason that we explain below. Let $\alpha < \alpha'$ belong to the same matching interval and let them be not too far from each other, let us consider $x\in \R$ and call $x_0$ the canonical representative of $x$  in  $I_\alpha$,  $x'_0$ the canonical representative in $I_{\alpha'}.$ Consider the two orbits  $x_k=A_\alpha^k(x)$ and $x'_k=A_{\alpha'}^k(x)$; either the two orbits coincide forever (i.e. they are in state (A), with the notation used in Lemma \ref{l:matching}) or they eventually part for a certain index $k>0$, which can only happen if $x'_{k_0}=1-x_{k_0}$  (i.e. they enter in state (B)); note that this latter case might occur even for $k_0=0$. 

The effect of the matching property is that after a fixed number of $N$ iteration of $A_\alpha$ and $N'$ iterations of $A_{\alpha'}$ (where the  integers $N,N'$ depend only on the matching interval) the pair $(x_{k_0+N}, x'_{k_0+N'}),$ is again either in state (A) or (B), and $\beta_{k_0+N}=\beta_{k_0+N'}$. 

Note that, in general $N$ and $N',$ may be different. Indeed, if $\alpha, \alpha' \in (g,1],$ when the two orbits first part they satisfy $x'_k= 1-x_k$, but then $x_{k+1}=1/x_k-1.$ Hence $1/x'_k - 1/x_{k+1}=1$. This implies either $x'_{k+1}=x_{k+2}$ or $x'_{k+1}=1-x_{k+2}$,
i.e. $N'=1$, $N=2.$ Note that since $x'_k=x_kx_{k+1}$ it follows that $\beta'_k=\beta_{k+1}$.

Using this synchronization property, one can prove on every matching interval an analogue of formula \eqref{differenceB} obtaining that $B_\alpha(x)- B_{\alpha'}(x)=\sum_{k} \beta_{k-1}h(x_k) \chi_I(x_k)$, where $\chi_I$ is the characteristic function of $I_\alpha \setminus I{\alpha'}$ and  $h$ is  an analytic function depending on the interval.  We believe that this feature can be useful to exlpain why the graph of the minimum function  looks everywhere constant (see Figure \ref{figureminima} and Figure \ref {maximazoom}), and probably it is also useful to study the  continuity. On the basis of the numerical evidence and the above discussion it is natural to ask the following:
\begin{que}
  Let $\alpha \in [0,1]\setminus \mathcal{E}$.
Is $\inf_xB_\alpha (x)$ actually a minimum?
Is the function $\alpha \mapsto \inf_xB_\alpha (x)$ continuous at $\alpha_0$?  
\end{que}

However, it is worth noting that when $\alpha<\gamma$, characterizing the minimum point of $B_\alpha$ might be nontrivial, even in the cases when the numerical evidence is quite clear. For instance, when $\alpha=2/5,$ the numerical evidence suggests that $\min_x B_\alpha(x)= B_\alpha(\gamma)$, but in this case $\gamma$ {\bf is not} a fixed point of $A_\alpha$ (indeed $\gamma$ is only a eventually periodic point of $A_\alpha$). Therefore, one cannot just adapt the method which worked for $\alpha=1,1/2$.
The same issue arises for $\alpha=1/(n+1)$ with $n>1$; in this case, numerical evidence suggests that 
 the minimum of $B_\alpha$ is attained at the point 
$r=[0;1,n-1,\overline{1,n}],$ which again is not a fixed point of $A_\alpha$.

\noindent\textbf{Acknowledgements:}

The research of the first and third authors is supported by the research project ‘Dynamics and Information Research
Institute - Quantum Information, Quantum Technologies’ within the agreement between UniCredit
Bank and Scuola Normale Superiore.
The first author also acknowledges the support of the Centro di Ricerca Matematica Ennio de
Giorgi, SNS for providing excellent working conditions and research travel funds. 
The second author is partially supported by the PRIN Grant 2022NTKXC of the Ministry of University and Research (MUR), Italy. The second author acknowledges the support of the MIUR Excellence Department Project awarded to the Department of Mathematics, University of Pisa, CUP I57G22000700001.
The authors would like to thank the anonymous referee for the helpful comments, which have significantly improved the presentation of the paper.

\Addresses

\end{document}